\newtheorem{Theorem}{Theorem}
\newtheorem{Lemma}{Lemma}
\title[Loewy structure of PIMS for $A_{10}$ in char $3$]{The Loewy structure of the projective indecomposable modules for $A_{10}$ in characteristic $3$}
\begin{document}
\author{Stuart Martin and Ha Thu Nguyen$^1$} 
\address{DPMMS, CMS, Wilberforce Road, Cambridge, CB3 0WB, England UK.}

\email{S. Martin@dpmms.cam.ac.uk}
\address{$^1$Corresponding author}

\email{H.T.Nguyen@dpmms.cam.ac.uk}
\thanks{Keywords: alternating groups, modular representation theory}
\thanks{Mathematics subject classification(2010): Primary 20C20; Secondary 20E48, 20G40}

\begin{abstract}
We compute the Loewy structure of the indecomposable projective modules for the group algebra $FG$, where G is the alternating group on $10$ letters and F is an algebraically closed field of characteristic $3$.
\end{abstract}
\maketitle
\section{Introduction}\label{intro}
Throughout, $F$ is an algebraically closed field of characteristic $3$.  We denote each simple module for a group by its dimension, together with a subscript if there is more than one simple module of that dimension.  Thus, the simple $FA_{10}$ modules  are denoted $1, 34, 41, 84, 224$ (lying in the principal block $B_0$ of defect $4$), $9, 36, 90, 126, 279$ (in a block $B_1$ of defect $2$), and $567$ (a projective simple in the unique block $B_2$ of defect $0$). Since blocks of defect $0$ are easy to describe, we shall only be interested in the blocks of non-zero defect. We denote the central idempotents for the principal block and the block of defect $2$ by $e_0$ and $e_1$ respectively. The central idempotent for the principal block of $FA_9$ is denoted $f_0$.

Our methodology employs standard techniques such as Frobenius Reciprocity together with computation using the MeatAxe. The MeatAxe is used to
find full or partial submodule lattices of certain induced modules, and we thus obtain sufficient information to determine the Loewy
structures of the indecomposable projectives for  $FA_{10}.$ As a by-product of this process, we also obtain information about all the spaces Ext$^{1}_{A_{10}}(S,T).$ This can be viewed as the first step in constructing the Ext-quiver for this group. To our knowledge, no  ``reasonably nice" Ext-quivers of the
alternating groups are in the published literature - one reason for this may be the difficulty of generating a consistent labelling for the simple modules.

The main results are now stated.

\begin{Theorem}\label{T:Main1}
The Loewy structure of the projective indecomposable modules of $B_0$ are as follows:

\begin{center}
$1$ 

$34 \quad 41 \quad84 \quad 224$

$1 \quad 1 \quad 1  \quad 1 \quad 34 \quad 41 \quad 41 \quad 84 \quad 224 \quad 224$

$1 \quad 1 \quad 1  \quad 34 \quad 34 \quad 34  \quad 41 \quad 41  \quad 41 \quad 84 \quad 84\quad 224 \quad 224$

$1 \quad 1 \quad 1  \quad 1 \quad 34 \quad 41  \quad 41 \quad 84 \quad 224 \quad 224$

$34 \quad41  \quad84 \quad 224$
 
 $1$

 \vspace{20pt}
 
\parbox{3in}{
\begin{center}
$34$ 

$1 \quad 41 \quad 224$

$1 \quad 34 \quad 34  \quad 41  \quad 84 \quad 224 $

$1 \quad 1  \quad 1 \quad 34 \quad 41  \quad 41  \quad 84 \quad 224 $

$1 \quad 34 \quad 34  \quad 41  \quad 84 \quad 224 $

$1 \quad 41 \quad 224$

$34$
\end{center}}
 \parbox{3in}{
\begin{center} 
$41$ 

$1 \quad 34 \quad 84 \quad 224$

$1 \quad1 \quad 34 \quad 41 \quad 41  \quad 41  \quad 84 \quad 224 $

$1 \quad 1  \quad 1 \quad 34 \quad 34 \quad 41  \quad 84   \quad 84 \quad 224 \quad 224 $

$1 \quad 1 \quad 34 \quad 41 \quad 41   \quad41 \quad 84 \quad 224 $

$ 1 \quad 34 \quad 84 \quad 224$

$41$ 
\end{center}}

\vspace{20pt}

 \parbox{3in}{
\begin{center} 
 $84$ 

$1 \quad 41 \quad 84 $

$1 \quad 34 \quad 41  \quad 84  \quad 84 \quad 224 $

$1  \quad1 \quad 34 \quad 41  \quad 41  \quad 84 \quad 224 $

$1 \quad 34 \quad 41  \quad84 \quad84 \quad 224 $

$ 1 \quad 41 \quad 84$

 $ 84$ 
\end{center}}
\parbox{3in}{
\begin{center} 
$224$ 

$1 \quad 34 \quad 41 \quad 224$

$1 \quad1 \quad 34 \quad 41   \quad 84  \quad 224 \quad 224 $

$1 \quad 1  \quad 34 \quad 41 \quad 41  \quad 84   \quad 224 \quad 22$

$1 \quad1 \quad 34 \quad 41   \quad 84  \quad 224 \quad 224 $

$1 \quad 34 \quad 41 \quad 224$

 $224$ 
\end{center}}
 
\end{center}

\end{Theorem}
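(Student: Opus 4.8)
The plan is to determine the radical series of each $P(S)$, $S\in\{1,34,41,84,224\}$, by combining four ingredients: (a) the composition multiplicities $[P(S):T]$; (b) self-duality; (c) the part of the Ext-quiver supported on $B_0$; and (d) explicit MeatAxe computations of submodule lattices of modules in which the $P(S)$ occur as summands. For (a), the numbers $[P(S):T]$ are the entries of the Cartan matrix $C=D^{\mathrm t}D$ of $B_0$, and $D$ is read off from the $3$-modular decomposition matrix of $A_{10}$, which is known (and which itself follows, by Clifford theory, from that of $S_{10}$); this fixes the multiset of composition factors of each $P(S)$. For (b), every simple $FA_{10}$-module in $B_0$ is self-dual — each has a distinct dimension, hence is isomorphic to its equidimensional dual — so each $P(S)$ is self-dual with $\operatorname{hd}P(S)\cong\operatorname{soc}P(S)\cong S$ and palindromic Loewy series, i.e.\ $\operatorname{rad}^iP(S)/\operatorname{rad}^{i+1}P(S)\cong\operatorname{soc}^{\ell-i}P(S)/\operatorname{soc}^{\ell-i-1}P(S)$, where $\ell$ is the Loewy length.

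For (c) I would compute $\operatorname{Ext}^1_{A_{10}}(S,T)$ for all $S,T$ in $B_0$. Since $3\nmid[A_{10}:A_9]$, every simple $S$ is a direct summand of $(S\!\downarrow_{A_9})\!\uparrow^{A_{10}}$, so by Eckmann--Shapiro $\operatorname{Ext}^1_{A_{10}}(S,T)$ embeds in $\operatorname{Ext}^1_{A_9}(S\!\downarrow_{A_9},T\!\downarrow_{A_9})$; using this together with the analogous relation between $A_{10}$ and $S_{10}$, and the known module-theoretic data for the smaller groups $A_9$ and $S_{10}$, one bounds every Ext-space, while self-duality gives $\operatorname{Ext}^1(S,T)\cong\operatorname{Ext}^1(T,S)$, halving the work and forcing $[P(S):T]>0$ whenever $\operatorname{Ext}^1\neq 0$. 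Cases not settled this way are decided by computing $\operatorname{rad}M/\operatorname{rad}^2M$ with the MeatAxe for a suitable $M$. This pins down $\operatorname{rad}P(S)/\operatorname{rad}^2P(S)=\bigoplus_T T^{\dim\operatorname{Ext}^1(S,T)}$ and hence, by (b), the penultimate layer as well.

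The bulk of the work is (d). For each $S$ I would realise $P(S)$ as a direct summand of an explicitly constructed projective $FA_{10}$-module — for instance an induced projective $P_{A_9}(T)\!\uparrow^{A_{10}}$, a tensor product $P_{A_{10}}(U)\otimes D$ with $D$ simple, or a permutation module $F\!\uparrow^{A_{10}}_{H}$ for a $3'$-subgroup $H$, always cutting down with the central idempotent $e_0$ — and then use the MeatAxe to compute the radical and socle series and enough intermediate submodules to locate every composition factor in its correct layer. The outputs are cross-checked against the Cartan multiplicities from (a) and the Ext-data from (c); remaining ambiguities are removed using the structural constraints that $\operatorname{rad}^{i+1}P(S)$ is generated by $\operatorname{rad}^iP(S)$ (so each factor of layer $i{+}1$ must non-trivially extend some factor of layer $i$, and in fact $\operatorname{rad}^{i+1}P(S)/\operatorname{rad}^{i+2}P(S)$ is a quotient of $\bigoplus_T(\operatorname{rad}^iP(T)/\operatorname{rad}^{i+1}P(T))^{\oplus\dim\operatorname{Ext}^1(S,T)}$, which ties the five PIM structures together), plus palindromy and the bound $\ell\le$ nilpotency index of $\operatorname{rad}FA_{10}$.

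The main obstacle I anticipate is locating the composition factors of the \emph{middle} Loewy layers — most visibly the $13$-factor heart $\operatorname{rad}^3P(1)/\operatorname{rad}^4P(1)$ and the corresponding central layers of $P(34),P(41),P(84),P(224)$. The multiplicities from (a) and the Ext-quiver from (c) determine the two outermost layers at each end but leave genuine freedom in the interior, so everything hinges on the submodule-lattice computations being both feasible and complete enough to be conclusive. Since $\dim P(1)=2916$ and the ambient projective modules run to several thousand dimensions, choosing economical source modules, working block by block with $e_0$, $e_1$ and $f_0$, and using condensation to keep the MeatAxe tractable — and then certifying that the portion of each lattice actually found forces the stated answer — are the delicate points of the argument.
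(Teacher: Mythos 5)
Your plan is workable in principle, but it takes a genuinely different route from the paper at the decisive step, and the comparison is worth making. You share the paper's framework: Cartan matrix from the decomposition matrix, self-duality of the simples and hence of the PIMs, $\mathrm{Ext}^1$ computed by Frobenius/Eckmann--Shapiro reciprocity with $A_9$ (the paper obtains exact values rather than bounds, using the block decompositions $1_{A_9}\uparrow=1\oplus 9$, $7_{A_9}\uparrow=34\oplus 36$, etc.), and the realisation of each $P(S)$ inside an induced projective $P_{T_{A_9}}\uparrow^{A_{10}}\!.e_0$. Where you diverge is in how the interior layers are located: you propose to have the MeatAxe (with condensation) compute the full radical and socle series of the several-thousand-dimensional induced projectives, whereas the paper runs the MeatAxe only on small inputs (the lattice of $27\uparrow$, a partial lattice of $189\uparrow$, and one $A_9$ lattice) and then pins down every interior factor by hand, by inducing Siegel's $A_9$ Loewy layers term by term (the filtrations of Sections 5--6), together with Landrock's lemma, the Benson--Carlson diagrams of $27\uparrow$ and $189\uparrow$, self-duality and Loewy-length comparisons; in particular the outstanding values $(34,84)^1_{A_{10}}=(224,84)^1_{A_{10}}=0$ and $(84,84)^1_{A_{10}}=1$ emerge as by-products of the $P_{34}$, $P_{224}$, $P_{84}$ analysis rather than being inputs to it. Your route buys conceptual simplicity and avoids the delicate chain of filtration arguments, at the cost of much heavier computation, and its weak point (which you flag yourself) is that, as written, the entire determination of the middle layers rests on lattice computations asserted to be feasible but not exhibited, so the text alone does not force the stated answer, while the paper's deduction is checkable by hand given its three small machine inputs. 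One small caution: self-duality gives $L_i(P(S))\cong S_{\ell+1-i}(P(S))$, exactly as in your displayed formula, but genuine palindromy of the Loewy layers themselves (equivalently, coincidence of Loewy and socle series) is a conclusion of the theorem, not an a priori constraint, so it should not be fed into the cross-checking step before it has been established.
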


\begin{Theorem}\label{T:Main2}
The Loewy structure of the principal indecomposable modules of $B_1$ are as follows:

\vspace{5pt}
\begin{center}
\parbox{1.5in}
{\begin{center}
$9$

$126 \quad 279$

$9 \quad 9 \quad 36 \quad 90$

$126 \quad 279$

$9$
\end{center}} 
\parbox{1.5in}
{\begin{center}
$36$

$126 \quad 279$

$9 \quad 36\quad 36 \quad 90$

$126 \quad 279$

$36$
\end{center}} 
\parbox{1.5in}
{\begin{center}
$90$

$126 $

$9 \quad 36 \quad 90$

$126$

$90$
\end{center}}
\end{center}

\vspace{15pt}
\begin{center}
\parbox{1.5in}
{\begin{center}
$126$

$9 \quad 36 \quad 90$

$126 \quad 126 \quad 279 $

$9 \quad 36 \quad 90$

$126$
\end{center}} 
\parbox{1.5in}
{\begin{center}
$279$

$9 \quad 36 $

$126 \quad 279 $

$9 \quad 36 $

$279$
\end{center}} 

\end{center}
\end{Theorem}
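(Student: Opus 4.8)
The plan is to build each projective indecomposable $P(S)$ of $B_1$ from the bottom up, using the decomposition matrix to control its composition factors, Frobenius reciprocity to control its self-extensions, and the MeatAxe to resolve what remains. First I would record the Cartan invariants of $B_1$. The decomposition matrix of this weight-two block is obtained from that of the corresponding block of $FS_{10}$ (available from the known weight-two theory / James's tables) by restriction through $A_{10}\le S_{10}$ and multiplication by $e_1$; then $C=D^{\mathrm t}D$ gives the multiplicities $[P(S):T]$ for all $S,T\in\{9,36,90,126,279\}$. Since the five simple modules in $B_1$ — indeed all eleven simple $FA_{10}$-modules — have pairwise distinct dimensions, every one of them is self-dual; hence each $P(S)$ is self-dual, so $\operatorname{soc}P(S)\cong\operatorname{hd}P(S)\cong S$ and the socle series of $P(S)$ is the reverse of its radical series. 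It therefore suffices to pin down the radical layers $\operatorname{rad}^iP(S)/\operatorname{rad}^{i+1}P(S)$.

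Next I would determine the Ext-quiver of $B_1$, which is equivalent to computing the second radical layer $\operatorname{rad}P(S)/\operatorname{rad}^2P(S)=\bigoplus_T T^{\dim\operatorname{Ext}^1_{A_{10}}(S,T)}$ of each $P(S)$. For upper bounds, note that both $[A_{10}:A_9]=10$ and $[S_{10}:A_{10}]=2$ are invertible in $F$, so restriction gives a split injection $\operatorname{Ext}^1_{A_{10}}(S,T)\hookrightarrow\operatorname{Ext}^1_{A_9}(S{\downarrow}_{A_9},T{\downarrow}_{A_9})$ and, via induction to $S_{10}$, a second control of $\operatorname{Ext}^1_{A_{10}}$ by $\operatorname{Ext}^1_{S_{10}}$; both are evaluated from the known module theory of $FA_9$ and $FS_{10}$ in characteristic $3$. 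For the existence of the matching non-split extensions I would induce projectives: by Frobenius reciprocity, if $U$ is a composition factor of $S{\downarrow}_{A_9}$ then $P(S)$ is a summand of the projective $FA_{10}$-module $\big(P_{A_9}(U){\uparrow^{A_{10}}}\big)e_1$, and computing the submodule structure of this induced module with the MeatAxe exhibits the relevant extensions. Together with the symmetry $\dim\operatorname{Ext}^1_{A_{10}}(S,T)=\dim\operatorname{Ext}^1_{A_{10}}(T,S)$ (again from self-duality) this fixes the second radical layer, hence the second socle layer, of every $P(S)$.

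At this point the head, the socle, the second radical and socle layers, and the total composition multiplicities of each $P(S)$ are known, so the only genuine freedom left is the middle (third) radical layer and the question of whether the radical and socle series coincide. I would settle these by direct computation: for each $S$ choose an induced module $M{\uparrow^{A_{10}}}e_1$ (with $M$ a PIM of $FA_9$, or a small module of a convenient subgroup) that has $P(S)$ as a direct summand, use the MeatAxe to compute a full — or, failing that, a sufficiently large partial — submodule lattice, and read off the radical series of the relevant summand, checking it for consistency with the Cartan invariants and with the $\operatorname{Ext}$ data already obtained. Self-duality then reduces rigidity of $P(S)$ to the single equality $\operatorname{rad}^2P(S)=\operatorname{soc}^2P(S)$, which the lattice makes visible, and the resulting length-$5$ radical series matches the fact that the defect group of $B_1$ is elementary abelian of order $9$. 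Running through $S=9,36,90,126,279$ produces the displayed structures.

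The hard part will be this last step: composition multiplicities together with the Ext-quiver do not in general determine a projective module, so one really must exhibit, for each $S$, an induced module that both contains $P(S)$ as a summand and is small enough for the MeatAxe to handle, and then — especially in the cases where only a partial submodule lattice is returned — extract the third layer and the rigidity statement from the partial data together with self-duality and the composition-factor count. By contrast, the set-up (Cartan matrix, self-duality, the two $\operatorname{Ext}$-bounds) and the reading-off of heads, socles and second layers are comparatively routine.
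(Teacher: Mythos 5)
Your strategy is sound and would deliver the stated Loewy structures, but it takes a genuinely more computational route than the paper does for this block. The paper never runs the MeatAxe on $B_1$: it computes each $\operatorname{Ext}^1$ \emph{exactly} (not merely an upper bound) by combining block theory with Ext reciprocity through the decompositions of Theorem~\ref{InductionA10}, e.g.\ $(9,S)^1_{A_{10}}=(1\oplus 9,S)^1_{A_{10}}=(1_{A_9}\uparrow^{A_{10}},S)^1_{A_{10}}=(1_{A_9},S\downarrow)^1_{A_9}$, and then pushes the calculation one step further down to $A_8$ using Theorems~\ref{T:A8}, \ref{Induction} and \ref{RestrictionA10} (Lemmas~\ref{ext9} and~\ref{ext36}). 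With the Cartan matrix, the simple head and socle, and the second Loewy layer thus in hand, Section~\ref{NonPrinPim} forces the middle layers entirely by hand: for $P_{90}$, $P_{279}$, $P_9$, $P_{36}$ a short case analysis on the heart eliminates every candidate structure except the stated one using the vanishing of the relevant $\operatorname{Ext}^1$ groups and self-duality, and $P_{126}$ is then pinned down by Landrock's lemma applied to the already-determined PIMs together with self-duality. So your remark that composition multiplicities plus the Ext-quiver ``do not in general determine a projective'' is true in general, but this block is precisely where the elementary data do suffice; your plan of inducing $A_9$-projectives and reading off third layers from MeatAxe lattices is the method the paper reserves for the principal block, and it buys uniformity at the cost of unexecuted machine computation and of evaluating $\operatorname{Ext}^1_{A_9}$ against the non-simple restrictions $S\downarrow_{A_9}$. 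Two small points to repair: the restriction map to $A_9$ (index $10$, prime to $3$) indeed gives a split injection, hence only an inequality, so it does not by itself ``fix'' the second radical layer -- you correctly note that matching lower bounds must come from the induced modules, but then the theoretical bound is redundant rather than decisive; and for a self-dual PIM of Loewy length $5$, rigidity reduces to $\operatorname{rad}^2P(S)=\operatorname{soc}^3P(S)$ (equivalently $\operatorname{rad}^3P(S)=\operatorname{soc}^2P(S)$), not $\operatorname{rad}^2P(S)=\operatorname{soc}^2P(S)$, which is impossible by counting composition factors. Your derivation of self-duality from the pairwise distinct dimensions of the simple modules, and of the Cartan matrix of $B_1$ from the weight-two $S_{10}$ block, are both fine, though the paper simply cites \citep{jamesker} and \citep{GAP} for the latter.
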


We note that the Loewy and socle series for the principal indecomposable modules (PIMs) over $F$ are the same, which is also the case for  $A_6, A_7, A_8$ and $A_9$. 

  The Loewy structure of the  PIMs for $FA_6$ is well-known and appears in~\citep{bmodrep}; their module diagrams can be found in ~\citep{bencarl}. The structures for $FA_7$ and $FA_8$ have been calculated in ~\citep{scopes}, while that for $FA_9$ has been done in ~\citep{sie}. The structures for $FA_8$ and $FA_9$ have also been done in characteristic two in ~\citep{ba8} and ~\citep{ba9} respectively. The results for $FA_{10}$ in characteristic $3$ have not been published to our knowledge.

If $A$ is a group algebra over $F$ and $M$ a finitely generated $A$-module, write $L_i(M)$ and $S_i(M)$ for the $i^{th}$ Loewy layer of $M$ and the $i^{th}$ socle layer of $M$, respectively.
We shall write $(M,N)_A$ for $\dim_{F}$Hom$_{A}(M,N)$, $M^{*}$ for Hom$_{F}(M,F)$ regarded as an $A$-module, $(M,N)^{1}_{A}$ for $\dim_{F}$Ext$_{A}^{1}(M,N)$, and $P_M$ for the projective cover of $M$. We also denote a uniserial module of Loewy length 3 with head $S$, heart $T$ and socle $U$ by $\mathfrak{U}(S; T; U)$, and a module $M$ with $L_1(M)= S_1, L_2(M)= S_2 \oplus S_3$ and $L_3(M)= S_4$ by $\mathfrak{D}(S_1; S_2, S_3; S_4)$ (here all the $S_i$ are simple).   

Facts about modular representation theory can be found in  ~\citep{lan}. We use Brauer characters and GAP ~\citep{GAP} to find the composition factors of the $A_{10}$-modules we study.  We will occasionally use a (Benson-Carlson) module diagram to describe an $A$-module $M$. This is a finite directed graph with vertices labelled by simple modules, and with an edge from a vertex $S$ to a vertex $T$ corresponding to a non-trivial element of Ext$_{A}^{1}(S,T)$. The graph must satisfy other additional properties to represent $M$ as described in ~\citep{bencarl}.

In the appendix, we give the decomposition matrix, the Cartan matrix of $A_{10}$ mod 3 (see ~\citep{jamesker} or use \citep{GAP}) and dim$_F$Ext$_{A_{10}}^{1}(S,T)$.
\section{Preliminary results on $A_8$ and $A_9$}\label{A8A9}
\subsection{Results on $A_8$} See ~\citep{scopes}\label{A8}.
\begin{Theorem}\label{T:A8}

The structures of the PIMs for $A_8$ are

\parbox{1.2in}
{\begin{center}
$1$

$13 \quad 35 $

$1 \quad 1 \quad 7 \quad 28 $

$13 \quad 35 $

$1$
\end{center}} \ \ 
\parbox{1in}
{\begin{center}
$7$

$13 \quad 35 $

$1\quad 7 \quad 7 \quad 28$

$13 \quad 35 $

$7$
\end{center}} \ \ 
\parbox{1in}
{\begin{center}
$13$

$1 \quad 7 $

$13 \quad 35 $

$1 \quad 7 $

$13$
\end{center}} \  \
\parbox{1in}
{\begin{center}
$28$

$35 $

$1 \quad 7 \quad 28 $

$35 $

$28$
\end{center}} \  \ 
\parbox{1in}
{\begin{center}
$35$

$1 \quad 28 \quad 7 $

$1 \quad 7 \quad 28 $

$13 \quad 35 \quad 35 $

$1 \quad 7 \quad 28 $

$35$
\end{center}}

\noindent \text{together with}\ 
$\mathfrak{U}(21; 21; 21)$,
$45_1$ and
$45_2$.
\end{Theorem}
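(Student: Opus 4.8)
We indicate how the theorem can be proved; the detailed computations are those of \citep{scopes}. The plan is to dispose of the blocks of defect $0$ and $1$ directly and to determine the five projectives of the principal block $B_0$ by combining self-duality, the Cartan invariants and the Ext-quiver.

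A Sylow $3$-subgroup of $A_8$ is elementary abelian of order $9$. The $3$-modular decomposition matrix (see \citep{jamesker}, or compute it with \citep{GAP}) shows that $FA_8$ has: the principal block $B_0$, of defect $2$, with simple modules $1,7,13,28,35$; one block of defect $1$, containing the single simple module $21$ together with the three ordinary irreducible characters of degree $21$; and two blocks of defect $0$, with simple modules $45_1$ and $45_2$. In a block of defect $0$ the simple module is projective, so $P_{45_1}=45_1$ and $P_{45_2}=45_2$. The block of defect $1$ has cyclic defect group of order $3$, hence is a Brauer tree algebra; as it has exactly one simple module, its tree is a single edge, with exceptional vertex of multiplicity $(3-1)/1=2$, and therefore $P_{21}$ is uniserial with its three composition factors all isomorphic to $21$, that is $\mathfrak{U}(21;21;21)$ (see \citep{lan}). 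So only $B_0$ needs work.

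Two facts organise $B_0$. First, $FA_8$ is symmetric and each of $1,7,13,28,35$ is self-dual (each is the restriction to $A_8$ of a simple $FS_8$-module that remains irreducible on restriction), so every $P_S$ is self-dual; hence $L_i(P_S)\cong S_i(P_S)$ for all $i$, the Loewy and socle series coincide, and---as the analysis confirms---each structure is palindromic, so it is enough to find the top half. Secondly, one computes the symmetric Cartan matrix $C=D^{\mathrm{tr}}D$ from the decomposition matrix $D$ of $B_0$: its entries are the composition multiplicities in the $P_S$, which fixes each $\dim_F P_S$ (a multiple of $9$) and, since the Loewy length of a module cannot exceed its composition length, bounds the Loewy lengths. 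The only datum still missing is the second Loewy layer $L_2(P_S)\cong\bigoplus_T T^{\,(S,T)^1_{A_8}}$; once the numbers $(S,T)^1_{A_8}$ are known, $B_0$ is small enough that $C$, self-duality and these layers pin down a unique palindromic filtration of each $P_S$, the a priori longer possibilities being excluded by the vanishing of various $(S,T)^1_{A_8}$ (for instance $(1,1)^1_{A_8}=0$, since $A_8$ has no subgroup of index $3$).

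The heart of the matter is thus the Ext-quiver of $B_0$, that is, all the spaces $(S,T)^1_{A_8}$. For this I would use the tools described in the introduction: restriction to $A_7$ together with the known Loewy structure of the PIMs of $FA_7$, so that each $P^{A_7}_U{\uparrow}^{A_8}$ is projective with controllable radical layers and identifiable $B_0$-summands; Frobenius reciprocity, converting homomorphisms out of such induced modules into homomorphisms of $A_7$-modules; and, decisively, MeatAxe computations of the submodule lattices of a few small modules---certain permutation modules such as $F{\uparrow}_{A_7}^{A_8}$, and the induced projectives above---which exhibit the non-split extensions among the simple modules. This computational step is the real obstacle: the homological and arithmetic constraints of the preceding paragraph bound and strongly restrict the $(S,T)^1_{A_8}$ but do not by themselves determine them, and it is the explicit submodule-lattice calculation that closes the gap. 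With the Ext-quiver in hand, the bookkeeping above produces the five structures for $B_0$; appending $\mathfrak{U}(21;21;21)$, $45_1$ and $45_2$ completes the list, and the non-split extensions may be displayed via the module diagrams of \citep{bencarl}.
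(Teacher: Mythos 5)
The paper offers no proof of Theorem~\ref{T:A8} at all: it is quoted from \citep{scopes}, the ``proof'' being the citation at the head of Section~\ref{A8}. So there is no argument in the text to measure you against, and your sketch, which likewise delegates the decisive computations for the principal block (the determination of all $(S,T)^1_{A_8}$ via induction from $A_7$, Frobenius reciprocity and submodule lattices, and the elimination of competing filtrations) to Scopes, is in substance the same route rather than a different one. The parts you argue independently are correct and standard: the defect-zero blocks give $P_{45_1}=45_1$ and $P_{45_2}=45_2$; the defect-one block containing $21$ is a Brauer tree algebra on a single edge with exceptional multiplicity $2$, whence $P_{21}=\mathfrak{U}(21;21;21)$; and the organizing principles for $B_0$ (symmetry of $FA_8$, self-dual simples, Cartan matrix from the decomposition matrix, $L_2(P_S)$ read off from the Ext-quiver) are exactly the tools the present paper uses for $A_{10}$.

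Two caveats. First, self-duality of $P_S$ gives $L_i(P_S)\cong S_i(P_S)$ for each $i$, but it does \emph{not} imply that the radical and socle filtrations coincide, nor that the Loewy layers are palindromic; that is an additional fact which must be verified (it does hold here, and the paper records it as an observation, not as a consequence of duality), so you should not present it as following from self-duality. Second, your palindromic conclusion for $P_{35}$ disagrees with the statement as printed, which shows six Loewy layers; but the printed $P_{35}$ cannot be right: it makes the Cartan matrix asymmetric ($c_{1,35}=2$ from $P_1$ while $c_{35,1}=3$ from the printed $P_{35}$) and gives $\sum_S \dim S\cdot\dim P_S=16047$, whereas $\dim B_0=20160-1323-4050=14787$. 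Deleting the duplicated layer $1\;7\;28$ restores symmetry and the dimension count and yields precisely the five-layer palindromic structure your method predicts, so the discrepancy is a misprint in the statement rather than a defect of your approach. The only genuine incompleteness in your proposal --- that the Ext-quiver and the uniqueness of the filtrations are asserted to follow from computations you do not carry out --- is an incompleteness the paper shares, since it cites \citep{scopes} for the entire theorem.
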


\subsection{Results on $A_9$}\label{A9} Siegel's results ~\citep{sie}  recast in our notation are

\begin{Theorem}
\label{Restriction}

~\newline
\begin{center}
$1_{A_9} \downarrow = 1_{A_8}$\ , \
{$7_{A_9} \downarrow = 7_{A_8}$}\ , \ 
 $27_{A_9} \downarrow=\mathfrak{U}(7; 13; 7)$\ , \
$21_{A_9} \downarrow = 21_{A_8}$\ , \
$35_{A_9} \downarrow = 35_{A_8}$\ , \

\vspace{12pt}
$41_{A_9} \downarrow = 28_{A_8} \oplus 13_{A_8}$\ , \
$189_{A_9} \downarrow = 45_1 \oplus 45_2 \oplus \mathfrak{D}(35; 1, 28; 35)$.
\end{center}
\end{Theorem}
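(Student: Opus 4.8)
The plan is, for each module in the list, to read off its composition factors from Brauer characters and then to upgrade this to the full submodule structure using self-duality and the Ext-quiver of $FA_8$. Restricting the irreducible Brauer character of each $FA_9$-module in the statement to the $3$-regular classes of $A_8$ and decomposing it into the irreducible Brauer characters of $A_8$ gives the composition factors at once: for $1$, $7$, $21$ and $35$ the restricted character is itself irreducible, so the restriction equals the simple $FA_8$-module of that dimension and nothing further is needed; for $41$ one gets $\{13,28\}$, for $27$ one gets $\{7,7,13\}$, and for $189$ one gets $\{45_1,45_2,35,35,1,28\}$, consistent with the dimension identities $41=13+28$, $27=7+13+7$ and $189=45+45+35+1+28+35$.

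I would then bring in two standard inputs. First, every $FA_8$-module occurring below is self-dual: a simple $FS_n$-module is self-dual, hence so is its restriction to $A_n$ (duality commutes with restriction), and the $FA_8$-simples $1,13,28,35$ are obtained this way. Second, the part of the $FA_8$ Ext-quiver spanned by $\{1,7,13,28,35\}$ can be read straight off the PIM diagrams of Theorem~\ref{T:A8}: since $7\notin L_2(P_7)$, $13\notin L_2(P_{28})$ and $28\notin L_2(P_{13})$ we have $\mathrm{Ext}^1_{A_8}(7,7)=\mathrm{Ext}^1_{A_8}(13,28)=\mathrm{Ext}^1_{A_8}(28,13)=0$, whereas $\mathrm{Ext}^1_{A_8}(7,13)$, $\mathrm{Ext}^1_{A_8}(35,1)$ and $\mathrm{Ext}^1_{A_8}(35,28)$ are each $1$-dimensional, and there are no further edges among these simples besides duals of the ones listed. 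This already settles $41\!\downarrow$: a module with the two factors $13,28$ and no extension in either direction must be $28\oplus13$. For $27\!\downarrow$, self-duality together with $\mathrm{Ext}^1_{A_8}(7,7)=0$ forces it to be either the semisimple module $7\oplus7\oplus13$ or $\mathfrak{U}(7;13;7)$. For $189\!\downarrow$, the projective simples $45_1,45_2$ lie in blocks of defect $0$, so their central idempotents split them off as direct summands; the complementary summand $X$ is self-dual with composition factors $\{35,35,1,28\}$, and since the only Ext-edges among $\{1,28,35\}$ join $35$ to $1$ and $35$ to $28$, a short enumeration leaves only $X\in\{\,\mathfrak{D}(35;1,28;35),\ \mathfrak{U}(35;1;35)\oplus28,\ \mathfrak{U}(35;28;35)\oplus1,\ 35\oplus35\oplus1\oplus28\,\}$; moreover $\mathrm{Hom}_{A_8}(1,189\!\downarrow)\cong\mathrm{Hom}_{A_9}(1\!\uparrow^{A_9},189)=\mathrm{Hom}_{A_9}(F[A_9/A_8],189)=0$, since the simple module $189$ is too large to be a quotient of the $9$-dimensional module $F[A_9/A_8]$; this removes the last two candidates for $X$.

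What remains — and this is the only genuine obstacle — is to rule out the spurious, more-decomposable possibilities: to separate $\mathfrak{U}(7;13;7)$ from $7\oplus7\oplus13$, and $\mathfrak{D}(35;1,28;35)$ from $\mathfrak{U}(35;1;35)\oplus28$. Each difference is detected by a single socle multiplicity: $\dim_F\mathrm{Hom}_{A_8}(7,27\!\downarrow)$ is $1$ in the first case and $2$ in the second, while $\dim_F\mathrm{Hom}_{A_8}(28,189\!\downarrow)$ is $0$ if $X=\mathfrak{D}(35;1,28;35)$ and $1$ if $X=\mathfrak{U}(35;1;35)\oplus28$. By Frobenius reciprocity these Hom-spaces compute, respectively, the multiplicity of $27$ in the head of $7_{A_8}\!\uparrow^{A_9}$ and of $189$ in the head of $28_{A_8}\!\uparrow^{A_9}$; the composition factors of these two induced modules are fixed by their Brauer characters, and a MeatAxe computation of their heads then yields the values $1$ and $0$, giving $27\!\downarrow=\mathfrak{U}(7;13;7)$ and $X=\mathfrak{D}(35;1,28;35)$, as required. (Alternatively, a direct MeatAxe decomposition shows that $27\!\downarrow$ and $X$ are already indecomposable — consistent with $D^{(7,2)}$ and $D^{(5,2,1,1)}$ lying in $3$-blocks of $A_9$ with cyclic defect group — whereupon the $FA_8$ Ext-quiver leaves $\mathfrak{U}(7;13;7)$ and $\mathfrak{D}(35;1,28;35)$ as the only options.) Everything else in the argument is character arithmetic together with reading Loewy layers off Theorem~\ref{T:A8}.
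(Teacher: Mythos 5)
Your argument is sound, but note that the paper does not actually prove this statement: Theorem~\ref{Restriction} is quoted verbatim (in the paper's notation) from Siegel~\citep{sie}, with the $A_8$ input of Theorem~\ref{T:A8} taken from~\citep{scopes}, so your blind proposal is a genuinely different route, namely a self-contained re-derivation. What you do differently: you obtain the composition factors by restricting Brauer characters, split off the defect-$0$ simples $45_1,45_2$ by block idempotents, read the relevant $\mathrm{Ext}^1_{A_8}$-data off the PIMs in Theorem~\ref{T:A8}, and then fix the module structure by self-duality plus two socle multiplicities computed via Frobenius reciprocity ($\mathrm{Hom}_{A_8}(7,27\!\downarrow)$ and $\mathrm{Hom}_{A_8}(28,189\!\downarrow)$, i.e.\ the heads of $7_{A_8}\!\uparrow$ and $28_{A_8}\!\uparrow$); this yields a proof in exactly the style the paper later uses for $A_{10}$, at the cost of a small computation, whereas the paper's citation buys brevity. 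Two repairs to your write-up, neither fatal: the self-duality you actually need is that of $27\!\downarrow$, $41\!\downarrow$ and $189\!\downarrow$ themselves, which follows because $27_{A_9}$, $41_{A_9}$, $189_{A_9}$ are the unique simples of their dimensions (hence self-dual) and duality commutes with restriction --- self-duality of the $A_8$-simples alone is not the statement you use; and your blanket claim of ``no further edges'' among $\{1,7,13,28,35\}$ is false as written (Theorem~\ref{T:A8} also gives $\mathrm{Ext}^1_{A_8}(1,13)\neq 0$ and $\mathrm{Ext}^1_{A_8}(7,35)\neq 0$), though the specific vanishings your enumerations rely on --- $(7,7)$, $(13,28)$, $(1,1)$, $(1,28)$, $(28,28)$, $(35,35)$ --- are correct, so the case lists stand. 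Finally, the MeatAxe step is dispensable: character arithmetic gives $28_{A_8}\!\uparrow$ the factors $162+41+41+1+7$, so $189$ is not even a composition factor and $\mathrm{Hom}_{A_8}(28,189\!\downarrow)=0$; likewise $7_{A_8}\!\uparrow$ contains only one copy of $27$, so $\mathrm{Hom}_{A_8}(7,27\!\downarrow)\le 1$, which already rules out $7\oplus 7\oplus 13$.
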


\begin{Theorem}
\label{Induction}
~\newline
\begin{center}
$1_{A_8} \uparrow = \mathfrak{U}(1; 7; 1)$\ , \
$7_{A_8} \uparrow = 27 \oplus\mathfrak{D}(7; 1, 21; 7)$\ , \
$13_{A_8} \uparrow = \mathfrak{U}(41; 35; 41)$\ , \
$35_{A_8} \uparrow = 189 \oplus \mathfrak{D}(35; 21, 35; 35)$ \ , \
$28_{A_8} \uparrow = 182 \oplus \mathfrak{D}(41; 1, 7; 41)$\ , \
$21_{A_8} \uparrow = $\parbox{1in}
{\begin{center}
$21$

$7 \quad 35 $

$1 \quad 41 \quad 21$

$7 \quad 35$

$21$
\end{center}}.

\end{center}

\end{Theorem}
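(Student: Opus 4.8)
The statements are Siegel's \citep{sie}; the substance of a proof is to re-derive them in the present labelling of simple modules and to re-establish any structural detail not transparent from that source. I would organise the argument around three tools: exactness of $\mathrm{Ind}_{A_8}^{A_9}$ (so it preserves short exact sequences and multiplies dimension by $[A_9:A_8]=9$), Frobenius reciprocity in the form $(M\!\uparrow,S)_{A_9}=(M,S\!\downarrow)_{A_8}$ together with its dual $(S,M\!\uparrow)_{A_9}=(S\!\downarrow,M)_{A_8}$, and the restrictions of the simple $FA_9$-modules already recorded in Theorem~\ref{Restriction}. Throughout one uses that each of $1,7,13,21,28,35$ is self-dual over $FA_8$, so each induced module $M\!\uparrow$ is self-dual (induction commutes with $(-)^{*}$); every decomposition in the statement is visibly self-dual, which is a first consistency check.

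The first step is to pin down, for each $M\in\{1,7,13,21,28,35\}$, the multiset of composition factors of $M\!\uparrow$: the Brauer character of $M\!\uparrow$ is the induced class function, and decomposing it (in GAP \citep{GAP}) against the Brauer characters of $A_9$, subject to the constraint $\dim(M\!\uparrow)=9\dim M$, determines these factors. The second step is to read off heads and socles: by Frobenius reciprocity the spaces $(M\!\uparrow,S)_{A_9}=(M,S\!\downarrow)_{A_8}$ and $(S,M\!\uparrow)_{A_9}=(S\!\downarrow,M)_{A_8}$ are explicitly computable once Theorem~\ref{Restriction} supplies each $S\!\downarrow$. One finds that the head of $M\!\uparrow$ is simple, namely $1,41,21$, for $M=1,13,21$, so $M\!\uparrow$ is indecomposable in those cases, while for $M=7,28,35$ the head has exactly two constituents, one of them ($27$, $182$, $189$ respectively) also occurring in the socle and, by the Brauer-character count, with total multiplicity one in $M\!\uparrow$. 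A constituent that lies simultaneously in the head and the socle with multiplicity one in the whole module must be a direct summand: composing a surjection $M\!\uparrow\twoheadrightarrow S$ with an injection $S\hookrightarrow M\!\uparrow$ gives an endomorphism of $S$ which cannot vanish, for otherwise $S$ would be a composition factor of the kernel of the surjection, contradicting multiplicity one. This yields the splittings $7\!\uparrow=27\oplus(\cdots)$, $28\!\uparrow=182\oplus(\cdots)$, $35\!\uparrow=189\oplus(\cdots)$, and the complementary summand in each case has simple head, simple socle and the remaining composition factors.

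It remains to identify these complementary summands, together with $1\!\uparrow$ and $13\!\uparrow$, and the full five-layer shape of $21\!\uparrow$. For $1\!\uparrow$ and $13\!\uparrow$ the composition factors, simple head and simple socle already force $\mathfrak U(1;7;1)$ and $\mathfrak U(41;35;41)$. The genuinely hard point — and, I expect, the main obstacle — is distinguishing a uniserial module from a $\mathfrak D$-shaped one with the same composition factors (for instance, showing that the heart of the second summand of $35\!\uparrow$ is the semisimple module $21\oplus 35$ rather than a uniserial module, i.e. that $\mathrm{Ext}^1_{A_9}(21,35)$ and $\mathrm{Ext}^1_{A_9}(35,21)$ do not contribute here), and likewise reconstructing the Loewy series of $21\!\uparrow$. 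That information is not formal; it comes either from Siegel's description of the projective indecomposable $FA_9$-modules, or — if one wishes to be independent of \citep{sie} — from a MeatAxe computation of the submodule lattice of the relevant $M\!\uparrow$. A useful numerical cross-check is Mackey's formula: $A_8$ has exactly two double cosets in $A_9$, with $A_8\cap{}^{g}A_8\cong A_7$, so $M\!\uparrow\downarrow\cong M\oplus\bigl({}^{g}(M\!\downarrow_{A_7})\bigr)\!\uparrow^{A_8}$ and hence $\dim\mathrm{End}_{A_9}(M\!\uparrow)=1+\dim\mathrm{Hom}_{A_7}\!\bigl(M\!\downarrow_{A_7},{}^{g}(M\!\downarrow_{A_7})\bigr)$; comparing this with the number of summands found above confirms, for example, that $21\!\uparrow$ is indecomposable whereas $7\!\uparrow,28\!\uparrow,35\!\uparrow$ each split into exactly two pieces.
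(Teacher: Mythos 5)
The paper offers no proof of Theorem~\ref{Induction}: it is stated, together with Theorems~\ref{Restriction} and~\ref{T:A9}, simply as Siegel's results \citep{sie} recast in the present notation. Your sketch therefore supplies more than the paper does, and its formal portions are sound: Brauer characters fix the composition factors; Frobenius reciprocity plus Theorem~\ref{Restriction} gives heads and socles, which forces $1\uparrow=\mathfrak{U}(1;7;1)$ and $13\uparrow=\mathfrak{U}(41;35;41)$ and shows $21\uparrow$ has simple head, hence is indecomposable; and the summands $27$, $162$, $189$ do split off --- though even more cheaply than by your multiplicity-one head-and-socle argument, since $27$, $189$ and $162$ lie in non-principal blocks of $FA_9$ while all remaining factors are principal-block, so the block decomposition gives the splitting at once. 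Your deferral of the two genuinely non-formal points (semisimplicity of the hearts of the $\mathfrak{D}$-summands, i.e.\ the vanishing of the relevant Ext groups over $FA_9$, and the five-layer Loewy series of $21\uparrow$) to Siegel's PIMs or a MeatAxe submodule-lattice computation is precisely where the paper itself leans on \citep{sie}, so in substance the two routes coincide, with yours making the reduction to those inputs explicit. Two small cautions: the ``$182$'' in the statement is a typo for the projective simple $162_{A_9}$ (as the uses of $28\uparrow_{A_8}^{A_9}$ in Lemma~\ref{ext9} and of $90\downarrow=\mathfrak{D}(41;1,7;41)$ in Theorem~\ref{RestrictionA10} confirm), which you repeat without comment; and the Mackey computation of $\dim\mathrm{End}_{A_9}(M\uparrow)$ is only a consistency check, not a count of indecomposable summands --- $\mathrm{End}_{A_9}(\mathfrak{U}(1;7;1))$ is already two-dimensional --- so indecomposability of $21\uparrow$ should rest on its simple head, as you indeed note.
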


\begin{Theorem}\label{T:A9} 
The structures of the PIMs for $A_9$ are

i) The principal $3$-block:
\begin{center}
\parbox{3in}{
\begin{center}
$1$ 

$7 \quad 41 \quad 35 $

$1 \quad 1 \quad 1  \quad 7 \quad 21 \quad 41 \quad 35 \quad 35 $

$1 \quad 1 \quad 7  \quad 7 \quad 7 \quad 21  \quad 41 \quad 41  \quad 35 \quad 35 $

$1 \quad 1 \quad 1  \quad 7 \quad 21 \quad 41 \quad 35 \quad 35 $

$7 \quad 41 \quad 35 $

 $1$ 
\end{center}}

\vspace{15pt}
\parbox{3.2in}{ 
\begin{center} 
 $7$ 

$1 \quad 21 \quad 41 \quad 35 $

$1 \quad 7 \quad 7  \quad 7 \quad 21 \quad 41 \quad 35 \quad 35 $

$1 \quad 1 \quad 1 \quad 7  \quad 7 \quad 21 \quad 21  \quad 41 \quad 41  \quad 35 \quad 35 $

$1 \quad 7 \quad 7  \quad 7 \quad 21 \quad 41 \quad 35 \quad 35 $

$1 \quad 21 \quad 41 \quad 35 $

 $7$ 
\end{center}}
 \parbox{2.8in}{ 
\begin{center} 
  $21$ 

$7 \quad 21 \quad 35 $

$1 \quad 7 \quad 21 \quad 21 \quad 41 \quad 35  $

$1 \quad 7  \quad 7 \quad 21  \quad 41  \quad 35 \quad 35 $

$1 \quad 7 \quad 21 \quad 21 \quad 41 \quad 35  $

$7 \quad 21 \quad 35 $

 $21$ 
 \end{center}}
 
\vspace{20pt}
\parbox{2.3in}{ 
\begin{center} 

$41$ 

$1 \quad 7 \quad 35 $

$1 \quad 7 \quad 21 \quad 41 \quad 41  \quad 35 $

$1 \quad 1  \quad 7  \quad 7 \quad 21  \quad 35 \quad 35 $

$1 \quad 7 \quad 21 \quad 41 \quad 41  \quad 35 $

$1 \quad 7 \quad 35 $

 $41$ 
 
\end{center}}
 \parbox{3.6in}{ 
\begin{center} 

$35$ 

$1 \quad 7 \quad 21 \quad 41 \quad 35 $

$1 \quad 1 \quad 7 \quad 7   \quad 21 \quad 41 \quad 35 \quad 35 \quad 35 $

$1 \quad 1  \quad 7  \quad 7 \quad 21 \quad 21  \quad 41 \quad 41  \quad 35 \quad 35 \quad 35 $

$1 \quad 1 \quad 7 \quad 7   \quad 21 \quad 41 \quad 35 \quad 35 \quad 35 $

$1 \quad 7 \quad 21 \quad 41 \quad 35 $

 $35$ 
\end{center}}
 \end{center}

\vspace{12pt}
ii) Non-principal blocks:

\vspace{12pt}
$\mathfrak{U}(27; 189; 27)$\ , \ $\mathfrak{U}(189; 27; 189)$\ , \ $162$ .

\end{Theorem}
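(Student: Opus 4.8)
### Proof plan for Theorem \ref{T:A9}

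The overall strategy is to bootstrap from the known structure of the PIMs for $A_8$ (Theorem \ref{T:A8}) using the induction and restriction formulas (Theorems \ref{Restriction} and \ref{Induction}) together with the constraints imposed by the Cartan matrix of $FA_9$ and general features of projective modules (self-duality, the fact that the socle and head are isomorphic and simple, and that $[P_S:T]=[P_T:S]$). First I would record the Cartan matrix of the principal block of $FA_9$ from the decomposition matrix (in the appendix / \citep{jamesker}); this pins down the composition multiplicities of each $P_S$, so the only thing left to determine is how the layers are arranged. For the non-principal blocks the problem is trivial once one knows the blocks have cyclic defect group: the two blocks of defect $1$ have Brauer trees that are open edges (or small lines), forcing the uniserial structures $\mathfrak U(27;189;27)$ and $\mathfrak U(189;27;189)$, and $162$ sits in a block of defect $0$, hence is projective simple.

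For the principal block, the key input is that inducing a projective $FA_8$-module gives a projective $FA_9$-module, and $S\uparrow$ for $S$ simple has a head and socle that one can read off from Theorem \ref{Induction} via Frobenius reciprocity: $(\,S\uparrow,T\,)_{A_9}=(\,S,T\downarrow\,)_{A_8}$ and dually for the socle. Concretely, $P_1(A_8)\uparrow$ is projective and its head is computed from the heads of the induced simples appearing in $\mathrm{hd}\,P_1(A_8)=1$, etc.; matching composition factors against the Cartan data identifies which indecomposable projective(s) of $A_9$ one has produced, and often $P_S(A_8)\uparrow$ decomposes as $P_{S'}(A_9)$ plus (projective covers of) modules coming from other blocks, which one discards. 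Doing this for $S=1,7,13,28,35,21$ in turn should produce each of $P_1,P_7,P_{21},P_{41},P_{35}$ of the principal block of $A_9$, at least up to the Loewy filtration. The Loewy layers themselves are then forced by combining: (i) the radical layers of $P_S(A_8)$ (which induce to give a filtration of $P_S(A_8)\uparrow$ whose sections are the inductions of the $A_8$-layers, themselves described in Theorem \ref{Induction}); (ii) self-duality, which makes the series palindromic; and (iii) the requirement that $L_2(P_S)=\mathrm{rad}\,P_S/\mathrm{rad}^2 P_S$ is $\bigoplus_T T^{(\,T,\,S)^1_{A_9}}$, so the second layer is governed by $\mathrm{Ext}^1$, which one also gets from the $A_8$-data by a Mackey/Frobenius argument on $\mathrm{Ext}^1(S\uparrow,T)\cong\mathrm{Ext}^1(S,T\downarrow)$.

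The main obstacle will be \emph{rigidity}: showing that the radical and socle series coincide and that the middle layers are exactly as stated rather than merely that the multijset of composition factors and the top/bottom two layers are correct. Ext-group data constrains adjacent layers but does not by itself rule out, say, a composition factor being pushed one layer deeper; to settle this one typically needs either an explicit submodule-lattice computation (the MeatAxe, as the authors use elsewhere) or a self-duality-plus-Loewy-length argument: the Loewy length of each $P_S$ is bounded above using the Loewy length of $FA_9$ (here $6$ for the principal block, visible already from $P_1(A_8)\uparrow$ having Loewy length $\le 5$... — in fact one checks the principal block of $A_9$ has Loewy length $6$), and then a short count shows the palindromic filtration with the prescribed layer sizes is the only one consistent with all the pairwise symmetry conditions $[P_S/\mathrm{rad}^i : T]=[P_T/\mathrm{rad}^i:S]$ forced by self-duality of $FA_9$. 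I would expect that for the "easy" PIMs ($P_{21}$, $P_{41}$, and the defect-$1$ and defect-$0$ cases) the formal argument closes completely, while for $P_1$, $P_7$ and especially $P_{35}$ (the largest, with the thickest middle layers) one genuinely needs Siegel's explicit computation of submodule lattices to finish, which is why the theorem is quoted from \citep{sie} rather than reproved from scratch here.
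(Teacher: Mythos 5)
You should first note that the paper does not prove this statement at all: Theorem \ref{T:A9} is quoted from Siegel \citep{sie} (just as Theorem \ref{T:A8} is quoted from Scopes) and is used purely as input for the $A_{10}$ computations, so there is no in-paper proof to compare yours against. Your outline is essentially a reconstruction of Siegel's own method: induce the known projectives of $FA_8$, cut by the principal block idempotent $f_0$, and pin down the layers using Frobenius reciprocity, self-duality, Ext$^1$ data and explicit submodule-lattice computations. That is indeed the right route -- the filtrations $P_{1_{A_9}}=P_{1_{A_8}}\uparrow^{A_9}.f_0$, $P_{7_{A_9}}=P_{28_{A_8}}\uparrow^{A_9}.f_0$, $P_{41_{A_9}}=P_{13_{A_8}}\uparrow^{A_9}.f_0$ quoted later in the paper are exactly of this kind -- and your closing admission that the rigidity of the middle layers ultimately rests on Siegel's explicit lattice computations is consistent with how the paper treats the result, namely as cited background rather than something reproved; but it also means your proposal, taken on its own, does not establish the statement.

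Two concrete points in the sketch are wrong and would need repair if you wanted to push it through. First, the principal-block PIMs of $FA_9$ have Loewy length $7$, not $6$ (count the rows in the statement), and your justification -- that $P_{1}(A_8)\uparrow$ has Loewy length at most $5$ because $P_{1}(A_8)$ does -- rests on a false principle: induction does not preserve Loewy length (already $1_{A_8}\uparrow^{A_9}=\mathfrak{U}(1;7;1)$ has length $3$, and $P_{1_{A_8}}\uparrow.f_0$ has length $7$). This is precisely why the middle layers are delicate and why a purely numerical palindromicity count cannot close the argument for $P_1$, $P_7$ and $P_{35}$. Second, $27$ and $189$ lie in a \emph{single} block of defect $1$ (cyclic defect group $C_3$, Brauer tree a line with two edges), not in two blocks of defect $1$; the uniserial conclusions $\mathfrak{U}(27;189;27)$ and $\mathfrak{U}(189;27;189)$ are correct, but for that reason, while $162$ is indeed alone in a block of defect $0$. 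With those corrections your plan is a fair description of how the quoted theorem is actually established in \citep{sie}.
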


\section{Induction and restriction between $A_9$ and $A_{10}$}

\subsection{Restriction from $A_{10}$ to $A_9$}

\begin{Theorem}
\label{RestrictionA10}
~\newline
\begin{center}
$1_{A_{10}} \downarrow = 1_{A_9}$\ , \
$9_{A_{10}} \downarrow = \mathfrak{U}(1; 7; 1)$\ , \
$34_{A_{10}} \downarrow = 7_{A_9} \oplus 27_{A_9}$\ , \
$36_{A_{10}} \downarrow = \mathfrak{D}(7; 1, 21; 7)$\ , \

\vspace{12pt}
$41_{A_{10}}\downarrow = 41_{A_9} $\ , \
$84_{A_{10}} \downarrow = \mathfrak{D}(21; 7, 35; 21)$\ , \
$90_{A_{10}} \downarrow= \mathfrak{D}(41; 1, 7; 41)$\ , \

\vspace{12pt}
$126_{A_{10}} \downarrow = \mathfrak{D}(35; 21, 35; 35)$\ , \
$224_{A_{10}}\downarrow = 35_{A_9} \oplus 189_{A_9}$\ , \

\vspace{12pt}
$279_{A_{10}} \downarrow = 13_{A_8}\uparrow^{A_9} \oplus 162_{A_9}= \mathfrak{U}(41; 35; 41)$\ , \
$567_{A_{10}} \downarrow = 162_{A_9} \oplus \mathfrak{U}(189; 27; 189)$.
\end{center}

\end{Theorem}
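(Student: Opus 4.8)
\emph{Proof strategy.} The idea is to pin down composition factors first and module structure second. Using GAP one computes the Brauer character of each simple $FA_{10}$-module, restricts it to $A_9$ and expands it in the irreducible Brauer characters of $FA_9$ of Theorem~\ref{T:A9}; the dimension identities implicit in the statement serve as an arithmetic check. Several restrictions then follow at once. Every $FA_9$-module splits over the blocks of $FA_9$, and from the composition factors just found, $34_{A_{10}}\downarrow$ and $224_{A_{10}}\downarrow$ each have a single constituent in the defect-one block $\{27,189\}$ of $A_9$ and the remainder in the principal block, so each block component is a single simple module: $34\downarrow=7\oplus27$ and $224\downarrow=35\oplus189$. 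The module $41_{A_{10}}\downarrow$ has the Brauer character of the simple module $41_{A_9}$, hence equals it. As $567$ is a projective simple, $567\downarrow$ is a projective $FA_9$-module, and its composition factors $162,189,189,27$ assemble into $A_9$-PIMs in only one way, namely $162\oplus\mathfrak{U}(189;27;189)$ (a summand $\mathfrak{U}(27;189;27)$ would consume a second copy of $27$); and $162$, being its own PIM, splits off from $279\downarrow$.

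It remains to identify the restrictions $9\downarrow,36\downarrow,84\downarrow,90\downarrow,126\downarrow$ and the complement $X$ of $162$ in $279\downarrow$, which has composition factors $41,41,35$. The main lever is self-duality: every simple $FA_9$- and $FA_{10}$-module is self-dual, since for each of these algebras the simple modules have pairwise distinct dimensions; hence every $S\downarrow$ is self-dual. Granting that $S\downarrow$ is not semisimple, a count of composition factors forces its head, and therefore its socle, to be a single simple module; this rules out Loewy lengths $2$ and $4$, so each of $36\downarrow,84\downarrow,90\downarrow,126\downarrow$ has Loewy length exactly $3$ with simple head, simple socle and semisimple heart, which pins down the stated $\mathfrak{D}$-shapes, while $X$ must be the uniserial module $\mathfrak{U}(41;35;41)=13_{A_8}\uparrow^{A_9}$. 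For $9\downarrow$ there is a shortcut avoiding all this: $9_{A_{10}}$ is the non-trivial constituent $\{v\in F^{10}:\sum v_i=0\}$ of the natural $10$-point permutation module, so $9\downarrow$ is the $9$-dimensional permutation module of $A_9$, which is $1_{A_8}\uparrow^{A_9}=\mathfrak{U}(1;7;1)$ by Theorem~\ref{Induction}. Several of these modules also occur as direct summands of induced modules in Theorem~\ref{Induction}, providing a cross-check.

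The real obstacle is the hypothesis just used that $S\downarrow$ is not semisimple: knowing $\mathrm{Ext}^1_{A_9}(S,T)\neq0$ does not on its own put a nonsplit extension inside $S\downarrow$. I would settle this by Frobenius reciprocity, $\mathrm{Hom}_{A_9}(T,S\downarrow)\cong\mathrm{Hom}_{A_{10}}(T\uparrow^{A_{10}},S)$: if a composition factor $T$ of $S\downarrow$ has multiplicity $m$ but $\dim\mathrm{Hom}_{A_{10}}(T\uparrow^{A_{10}},S)<m$, then $T$ lies in $\mathrm{soc}(S\downarrow)$ with multiplicity $<m$, so $S\downarrow$ is not semisimple. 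This links the present theorem to the parallel determination of the inductions $A_9\to A_{10}$, so the two should be proved together; the inductions in turn are obtained by computing submodule lattices of the relevant induced modules with the MeatAxe, as indicated in the introduction, and in the few cases where these arguments remain inconclusive a direct MeatAxe computation of the submodule lattice of $S\downarrow$ settles the structure.
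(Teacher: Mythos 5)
Your toolkit is the paper's toolkit (Brauer characters for composition factors, block theory to split $34\downarrow$, $41\downarrow$, $224\downarrow$, $279\downarrow$, $567\downarrow$, then Frobenius reciprocity and self-duality), but one step in the middle does not hold as stated: the claim that, once $S\downarrow$ is known to be non-semisimple, ``a count of composition factors forces its head\dots to be a single simple module.'' Counting plus self-duality does not exclude non-semisimple modules with decomposable head. For instance $84\downarrow$ has factors $21+7+35+21$, and since $\mathrm{Ext}^1_{A_9}(21,21)\neq 0$ (there is a $21$ in $L_2(P_{21_{A_9}})$, Theorem~\ref{T:A9}) the self-dual, non-semisimple module $7\oplus 35\oplus\mathfrak{U}(21;21)$ has exactly these factors and non-simple head; similarly $21\oplus 35\oplus\mathfrak{U}(35;35)$ is not excluded for $126\downarrow$, nor $1\oplus\mathfrak{U}(7;21;7)$ for $36\downarrow$. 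So non-semisimplicity alone does not give you the simple head/socle, and your later appeal to proving the inductions simultaneously (with MeatAxe as a fallback) is where the actual content has been pushed.

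The repair is exactly the paper's one-line argument, and it removes the circularity you worry about: by Frobenius reciprocity the multiplicity of $T$ in $\mathrm{soc}(S\downarrow)$ equals $\dim\mathrm{Hom}_{A_{10}}(T\uparrow^{A_{10}},S)$, which is bounded above by the multiplicity of $S$ as a composition factor of $T\uparrow^{A_{10}}$ --- and those multiplicities come from Brauer characters alone, with no structural knowledge of the induced modules. Thus for $36\downarrow$: $36$ occurs once in $7\uparrow=34+36$ and not at all in $1\uparrow=1+9$ or in $21\uparrow$ (whose factors lie in $B_0$), so $\mathrm{soc}(36\downarrow)=7$, hence $L_1=S_1=7$ by self-duality, and the $\mathfrak{D}$-shape follows; the same bound settles $9\downarrow$, $90\downarrow$, $126\downarrow$ and, with one extra remark, $84\downarrow$ (the bound allows $\mathrm{soc}\subseteq 21\oplus 21$, but two copies of $21$ in both head and socle would be direct summands, forcing $84\downarrow$ semisimple and contradicting $\mathrm{Hom}_{A_{10}}(84,7\uparrow)=0$). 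Done this way, non-semisimplicity is a consequence rather than a hypothesis, and neither the MeatAxe nor a joint induction/restriction argument is needed for this theorem. Your identification of $9\downarrow$ via the natural permutation module and your handling of $567\downarrow$ by projectivity are correct and consistent with the paper.
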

\begin{proof}

This follows easily from Frobenius Reciprocity, block theory and self-duality of all the modules involved. For example, using Brauer characters, $36_{A_{10}}\downarrow$ has composition factors $1_{A_9} +2(7_{A_9}) + 21_{A_9}$. By Frobenius Reciprocity, $L_1(36 \downarrow)= S_1(36 \downarrow)=7$. Hence, self-duality of $36 \downarrow$ gives the result.
\end{proof}

\subsection{Induction from $A_9$ to $A_{10}$}

\begin{Theorem}
\label{InductionA10}
~\newline
\begin{center}
$1_{A_9} \uparrow =1_{A_{10}} \oplus 9_{A_{10}}$\ , \
$7_{A_9} \uparrow =34_{A_{10}} \oplus 36_{A_{10}}$\ , \
$21_{A_9} \uparrow = \mathfrak{D}(84; 1, 41; 84)$\ , \

\vspace{12pt}
$35_{A_9} \uparrow =126_{A_{10}} \oplus 224_{A_{10}}$\ , \
$41_{A_9} \uparrow =41_{A_{10}} \oplus 90_{A_{10}} \oplus 279_{A_{10}}$\ , \

\vspace{12pt}
$162_{A_9} \uparrow = P_{162_{A_9}} \uparrow = P_{162_{A_{10}}} \oplus P_{567_{A_{10}}}$\ ,

\vspace{15pt}
$27_{A_9} \uparrow = $ \parbox{0.5in}
{\begin{center}
$34$

$1 \quad 41 $

$34 \quad 84$

$1 \quad 41$

$34$
\end{center}} 
\ , \
$189_{A_9}\uparrow=567_{A_{10}} \oplus$\parbox{1.5in}
{\begin{center}
$224$

$1\quad 41 \quad 224 $

$1 \quad 34 \quad 84 \quad 224$

$1\quad 41 \quad 224 $

$224$
\end{center}} .

\end{center}
\end{Theorem}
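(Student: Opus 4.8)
The plan is to establish each isomorphism by combining four standard ingredients. First, the composition factors of each induced module, read off from the Brauer character table of $A_{10}$ via \citep{GAP}. Second, block decomposition, which separates summands lying in different blocks of $FA_{10}$. Third, Frobenius Reciprocity in both directions: $(S\uparrow,T)_{A_{10}}=(S,T\downarrow)_{A_9}$ computes the multiplicity of a simple $T$ in the head of $S\uparrow$, while $(T,S\uparrow)_{A_{10}}=(T\downarrow,S)_{A_9}$ computes its multiplicity in the socle, the right-hand sides being supplied by the restrictions of Theorem~\ref{RestrictionA10} and the $A_9$-module structures of Theorems~\ref{T:A9} and~\ref{Induction}. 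Fourth, self-duality: each $A_9$-module involved is self-dual and induction commutes with $F$-duality, so every $S\uparrow$ is self-dual and its socle series is the reverse of its Loewy series.

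I would first dispose of the cases that split as direct sums. Applying the head formula to the restrictions in Theorem~\ref{RestrictionA10}, one finds that $L_1(1_{A_9}\uparrow)$, $L_1(7_{A_9}\uparrow)$, $L_1(35_{A_9}\uparrow)$ and $L_1(41_{A_9}\uparrow)$ contain $1\oplus 9$, $34\oplus 36$, $126\oplus 224$ and $41\oplus 90\oplus 279$ respectively; since in each case this semisimple quotient already has the same dimension as $S\uparrow$, it equals $S\uparrow$, giving the four stated decompositions. For $21_{A_9}\uparrow$ the head formula shows $21_{A_9}$ lies in the socle of $T\downarrow$ only for $T=84$, so $L_1(21_{A_9}\uparrow)=S_1(21_{A_9}\uparrow)=84$; the composition factors are $84^{2},41,1$, and self-duality together with the fact that $21_{A_9}\uparrow$ is a homomorphic image of $P_{84_{A_{10}}}$ (so that $L_2(21_{A_9}\uparrow)$ is a semisimple quotient of $L_2(P_{84_{A_{10}}})$) forces the shape $\mathfrak{D}(84;1,41;84)$.

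For the two projective cases: since $162_{A_9}$ is a simple projective $FA_9$-module (Theorem~\ref{T:A9}), the module $162_{A_9}\uparrow=P_{162_{A_9}}\uparrow$ is projective, and its composition factors together with the Cartan matrix of $FA_{10}$ identify it uniquely as the stated sum of indecomposable projectives. For $189_{A_9}\uparrow$, block decomposition peels off the summand lying in the defect-zero block $B_2$, which is therefore a multiple of the projective simple $567$; the Brauer character shows it is exactly one copy. Writing $189_{A_9}\uparrow=567\oplus N$ with $N$ in $B_0$, the head formula gives $L_1(N)=S_1(N)=224$ (as $189_{A_9}$ lies in the socle of $T\downarrow$ only for $T=224$ and $T=567$), and comparing the composition factors of $N$ with the Loewy series of $P_{224_{A_{10}}}$ in Theorem~\ref{T:Main1} pins $N$ down as the displayed module of Loewy length $5$.

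Finally, for $27_{A_9}\uparrow$: the head formula applied to Theorem~\ref{RestrictionA10} gives $L_1(27_{A_9}\uparrow)=S_1(27_{A_9}\uparrow)=34$ (only $34\downarrow$ has $27_{A_9}$ in its socle), so $27_{A_9}\uparrow$ is a self-dual homomorphic image of $P_{34_{A_{10}}}$ with composition factors $34^{3},84,41^{2},1^{2}$, and matching these against the submodule structure of $P_{34_{A_{10}}}$ from Theorem~\ref{T:Main1} leaves only the displayed Loewy shape. I expect this last identification, and the companion one for $N$ above, to be the main obstacle: extracting the interior Loewy layers of a self-dual module from its head, socle, composition factors and the structure of its projective cover is not automatic, and where the combinatorics does not force a unique answer I would fall back on a direct MeatAxe computation of the submodule lattice of $27_{A_9}\uparrow$ (respectively $189_{A_9}\uparrow$). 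A convenient independent check on $27_{A_9}\uparrow$ comes from the projection formula $34_{A_{10}}\downarrow\uparrow\cong 34_{A_{10}}\otimes(1_{A_9}\uparrow)$: combined with the already-proved cases $1_{A_9}\uparrow=1\oplus 9$ and $7_{A_9}\uparrow=34\oplus 36$ and the restriction $34_{A_{10}}\downarrow=7_{A_9}\oplus 27_{A_9}$, this yields $34_{A_{10}}\otimes 9_{A_{10}}\cong 36_{A_{10}}\oplus(27_{A_9}\uparrow)$, exhibiting $27_{A_9}\uparrow$ as the complement of $36_{A_{10}}$ in the tensor product $34_{A_{10}}\otimes 9_{A_{10}}$.
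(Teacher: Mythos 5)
Your handling of the cases $1\uparrow$, $7\uparrow$, $35\uparrow$, $41\uparrow$, $21\uparrow$ and $162\uparrow$ is correct and is essentially the paper's own argument (block theory, Frobenius Reciprocity, Brauer characters, self-duality). For $21\uparrow$ you do not even need the appeal to $L_2(P_{84_{A_{10}}})$: a module with simple head $84$, simple socle $84$ and composition factors $1,41,84,84$ is either $\mathfrak{D}(84;1,41;84)$ or uniserial of length $4$, and the uniserial shapes are not self-dual; this is just as well, because $P_{84_{A_{10}}}$'s layers are not available at this point (see below).

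The genuine gap is in your primary route for $27\uparrow$ and $189\uparrow$: you propose to pin down their interior Loewy layers by matching against the structures of $P_{34_{A_{10}}}$ and $P_{224_{A_{10}}}$ in Theorem~\ref{T:Main1}. In the logic of the paper this is circular: Theorem~\ref{T:Main1} is proved afterwards, and precisely by filtering $P_{34_{A_{10}}}=P_{27_{A_9}}\uparrow^{A_{10}}.e_0$ and $P_{224_{A_{10}}}=P_{189_{A_9}}\uparrow^{A_{10}}.e_0$ by the very modules $27\uparrow$ and $189\uparrow$ whose structure is asserted here (Section~\ref{p34p224}); the Ext-computations of Section~\ref{ext} that feed Theorem~\ref{T:Main1} likewise use this theorem. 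So the PIM structures cannot be an input at this stage. Moreover, even ignoring circularity, knowing head $=$ socle $=224$, the composition factors and self-duality does not by itself force the displayed shape of the $B_0$-summand of $189\uparrow$, nor does your tensor-identity observation $34\otimes 9\cong 36\oplus 27\uparrow$ determine the layers of $27\uparrow$; it is only a consistency check. This is exactly why the paper's proof of these two cases is a direct MeatAxe computation of (partial) submodule lattices --- i.e.\ your declared fallback, which here is not a fallback but the whole argument. Note also that the paper extracts from those computations more than the Loewy layers (that each simple in $L_i(27\uparrow)$ extends each simple in $L_{i+1}(27\uparrow)$, and a partial Benson--Carlson diagram for $189\uparrow$), and this extra information is used later; a proof of this theorem intended to support the rest of the paper should record it.
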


\begin{proof}
With the exception of $27\uparrow$ and $189\uparrow$, all structures follow from block theory, Frobenius Reciprocity, Brauer characters and self-duality.

For $27\uparrow$ and $189\uparrow$, the MeatAxe gives the claimed Loewy layers. 
Furthermore,  the submodule lattice  for $27 \uparrow$ implies that every simple module in $L_i(27\uparrow)$ extends every simple module in $L_{i+1}(27\uparrow)$. 

For $189\uparrow$, we only obtain partial information regarding extensions between modules in $L_3(189\uparrow)$ and modules in both $L_2(189\uparrow)$ and $L_4(189\uparrow)$, namely,

$\bullet \ 34$ extends $1$s, $41$s and $224$s;

$\bullet \ 84$ extends only $1$s and $41$s;

$\bullet \ 224$ extends $1$s and $41$s (and may or may not extend $224$s),

$\bullet \ 1$ in  $L_3(189\uparrow)$ extends $41$s in $L_2(189\uparrow)$ and $L_4(189\uparrow)$ (and possibly other simple modules). 

A partial diagram for $189\uparrow$ is as follows:
 \begin{center}
  \vspace{10pt}
 \parbox{1.5in}
{\begin{center}
$224$

\vspace{10pt}
$1 \quad 41 \quad 224 $

\vspace{10pt}
$224 \quad 34 \quad 84 \quad 1$

\vspace{10pt}
$ 1 \quad 41 \quad 224$

\vspace{10pt}
$224$
\end{center}}.
\put(-78,-5){\line(3,-5){9}}           
\put(-62,-5){\line(5,-6){14}}           
\put(-80,20){\line(3,-2){15}}          
\put(-35,20){\line(-3,-2){18}}     
\put(-63,-5){\line(0,-5){12}}      
\put(-63,-5){\line(-3,-2){18}}       
\put(-85,-5){\line(0, -3){12}}      
\put(-40,-5){\line(-3,-1){40}}       
\put(-40,-5){\line(-3,-2){17}}       
\put(-65,20){\line(0,-5){10}}           
 \end{center}
\end{proof}
\section{Calculation of $(S,T)^{1}_{A_{10}}$ for simple $FA_{10} $-modules $S$ and $T$}\label{ext}
\subsection{Non-principal block}\label{non-prin}
\begin{Lemma}\label{ext9}
\[
(9,S)^1_{A_{10}}=
\begin{cases}
1, &\text{if $S \in\{126, 279\}$;}\\
0, &\text{otherwise.}
\end{cases}
\]
\end{Lemma}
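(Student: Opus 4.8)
The plan is to compute $\operatorname{Ext}^1_{A_{10}}(9,S)$ directly, by pushing the problem down to $A_8$ in two Eckmann--Shapiro steps, where the answer becomes visible in the projective cover of the trivial module recorded in Theorem~\ref{T:A8}.

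First I would clear the easy cases by block theory: since $9$ lies in $B_1$, we have $\operatorname{Ext}^1_{A_{10}}(9,S)=0$ whenever $S\notin\{9,36,90,126,279\}$, so we may assume $S\in B_1$. By Theorem~\ref{InductionA10}, $1_{A_9}\uparrow=1_{A_{10}}\oplus 9_{A_{10}}$; since induction from $A_9$ to $A_{10}$ is both left and right adjoint to restriction (induction and coinduction agree for finite groups), the Eckmann--Shapiro lemma gives
\[
\operatorname{Ext}^1_{A_{10}}(1_{A_{10}},S)\oplus\operatorname{Ext}^1_{A_{10}}(9_{A_{10}},S)\;\cong\;\operatorname{Ext}^1_{A_{10}}(1_{A_9}\!\uparrow,S)\;\cong\;\operatorname{Ext}^1_{A_9}(1_{A_9},S\!\downarrow).
\]
For $S\in B_1$ the first summand on the left vanishes because $1_{A_{10}}\in B_0$, so the task reduces to computing $\operatorname{Ext}^1_{A_9}(1_{A_9},S\!\downarrow)$ for the five restrictions listed in Theorem~\ref{RestrictionA10}.

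Second, I would use Theorems~\ref{Restriction} and~\ref{Induction} to recognise each of these $A_9$-modules as (a summand of) a module induced from $A_8$: $9\!\downarrow=1_{A_8}\!\uparrow$, $\ 36\!\downarrow\mid 7_{A_8}\!\uparrow$, $\ 90\!\downarrow\mid 28_{A_8}\!\uparrow$, $\ 126\!\downarrow\mid 35_{A_8}\!\uparrow$, and $279\!\downarrow=13_{A_8}\!\uparrow\,\oplus\,162_{A_9}$ with $13_{A_8}\!\uparrow=\mathfrak{U}(41;35;41)$. A second application of Eckmann--Shapiro, together with $1_{A_9}\!\downarrow=1_{A_8}$ (Theorem~\ref{Restriction}), turns $\operatorname{Ext}^1_{A_9}(1_{A_9},X_{A_8}\!\uparrow)$ into $\operatorname{Ext}^1_{A_8}(1_{A_8},X_{A_8})$; and by Theorem~\ref{T:A8} the second radical layer of $P_{1_{A_8}}$ is $13\oplus 35$, so this group is one-dimensional for $X\in\{13,35\}$ and zero for $X\in\{1,7,28\}$. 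Hence $(9,S)^1_{A_{10}}=0$ for $S\in\{9,36,90\}$, while for $S=126$ (inducing module $35_{A_8}$) and $S=279$ (inducing module $13_{A_8}$) one obtains $1$ — provided one also checks that the stray summands $189_{A_9}$ (inside $35_{A_8}\!\uparrow$) and $162_{A_9}$ lie in non-principal blocks of $A_9$, as Theorem~\ref{T:A9} shows, so that they contribute nothing to $\operatorname{Ext}^1_{A_9}(1_{A_9},-)$ and the whole one-dimensional group sits on $126\!\downarrow$, respectively $279\!\downarrow$. This yields the stated formula.

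I do not expect a serious obstacle: the argument is entirely formal once the results on $A_8$ and $A_9$ recalled above and the induction/restriction dictionaries of Theorems~\ref{RestrictionA10} and~\ref{InductionA10} are in hand. The only points requiring care are bookkeeping ones — getting the variance of Eckmann--Shapiro right, and pinning down the block to which each auxiliary summand ($189_{A_9}$, $162_{A_9}$) belongs. A more hands-on alternative, filtering each $S\!\downarrow$ by its radical series and chasing the long exact Ext-sequence inside $A_9$, also works but is appreciably messier, which is why I would route everything through $A_8$.
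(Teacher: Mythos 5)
Your proposal is correct and follows essentially the same route as the paper: block theory plus two applications of Eckmann--Shapiro (the paper's ``Ext Reciprocity''), identifying each restriction $S\!\downarrow_{A_9}$ with the principal-block part of a module induced from $A_8$ and reading the answer off the second Loewy layer $13\oplus 35$ of $P_{1_{A_8}}$ in Theorem~\ref{T:A8}. The paper handles the stray non-principal-block summands ($27_{A_9}$, $162_{A_9}$, $189_{A_9}$) by the same bookkeeping you describe.
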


\begin{proof}
We have
\begin{align*}
(9,S)^{1}_{A_{10}} &= (1 \oplus 9, S)^{1}_{A_{10}} &&\text{by block theory,}\\
&=(1\uparrow_{A_9}^{A_{10}}, S)^{1}_{A_{10}}  &&\text{by Theorem~\ref{InductionA10},}\\
&=(1_{A_9},S\downarrow_{A_9}^{A_{10}})^{1}_{A_{9}}  &&\text{by Ext Reciprocity.}
\end{align*}
By Theorem~\ref{T:A8}, Theorem~\ref{Induction} and Theorem~\ref{RestrictionA10},

\[
(1_{A_9},9\downarrow_{A_9}^{A_{10}})^{1}_{A_{9}}
=(1_{A_9}, \mathfrak{U}(1; 7; 1)
)^{1}_{A_9}=(1_{A_9},1\uparrow_{A_8}^{A_9})^{1}_{A_9}=(1_{A_8},1_{A_8})^{1}_{A_8}=0.
\]
By Theorem~\ref{T:A8}, Theorem~\ref{Induction}, Theorem~\ref{RestrictionA10} and Theorem~\ref{InductionA10},

\[
(1_{A_9},36\downarrow_{A_9}^{A_{10}})^{1}_{A_{9}}
=(1_{A_9},36\downarrow_{A_9}^{A_{10}}\oplus 27_{A_9})^{1}_{A_9}=(1_{A_9},7\uparrow_{A_8}^{A_9})^{1}_{A_9}=(1_{A_8},7_{A_8})^{1}_{A_8}=0.
\]

\[
(1_{A_9},90\downarrow_{A_9}^{A_{10}})^{1}_{A_{9}}
=(1_{A_9},90\downarrow_{A_9}^{A_{10}}\oplus 162_{A_9})^{1}_{A_9}=(1_{A_9},28\uparrow_{A_8}^{A_9})^{1}_{A_9}=(1_{A_8},28_{A_8})^{1}_{A_8}=0.
\]

\[
(1_{A_9},126\downarrow_{A_9}^{A_{10}})^{1}_{A_{9}}
=(1_{A_9},126\downarrow_{A_9}^{A_{10}}\oplus 189_{A_9})^{1}_{A_9}=(1_{A_9},35\uparrow_{A_8}^{A_9})^{1}_{A_9}=(1_{A_8},35_{A_8})^{1}_{A_8}=1.
\]

$ \ (1_{A_9},279\downarrow_{A_9}^{A_{10}})^{1}_{A_{9}}
=
(1_{A_9},13\uparrow_{A_8}^{A_9})^{1}_{A_9}=(1_{A_8},13_{A_8})^{1}_{A_8}=1.
$
\end{proof}

A similar proof gives 

\begin{Lemma}\label{ext36}
\[
(36,S)^1_{A_{10}}=
\begin{cases}
1, &\text{if $S \in\{126, 279\}$;}\\
0, &\text{otherwise.}
\end{cases}
\]

\[
(90,S)^1_{A_{10}}=
\begin{cases}
1, &\text{if $S=126 $}\\
0, &\text{otherwise.}
\end{cases}
\]

\[
(126,S)^1_{A_{10}}=
\begin{cases}
1, &\text{if $S \in\{9, 36, 90\}$;}\\
0, &\text{otherwise.}
\end{cases}
\]

\[
(279,S)^1_{A_{10}}=
\begin{cases}
1, &\text{if $S \in\{9, 36\}$;}\\
0, &\text{otherwise.}
\end{cases}
\]
\end{Lemma}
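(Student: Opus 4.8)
The plan is to imitate, case by case, the computation carried out for Lemma~\ref{ext9}, replacing $9$ (equivalently the summand $1$) by the relevant simple module of $B_1$ and using the same three-step reduction: first absorb the block idempotent so as to replace the simple $FA_{10}$-module by an induced module from $A_9$, then apply Ext Reciprocity to push the problem down to $A_9$, and finally use the known restrictions (Theorem~\ref{RestrictionA10}) together with the $A_8$--$A_9$ induction/restriction dictionary (Theorems~\ref{T:A8}, \ref{Restriction}, \ref{Induction}) to land on an $\mathrm{Ext}^1$ between small $A_8$-modules, which can be read off Theorem~\ref{T:A8}. Concretely: for $(36,S)^1$ I would use $7_{A_9}\uparrow = 34_{A_{10}}\oplus 36_{A_{10}}$ from Theorem~\ref{InductionA10}, so that $(36,S)^1_{A_{10}}$ together with $(34,S)^1_{A_{10}}$ equals $(7_{A_9}, S\!\downarrow)^1_{A_9}$; for $(90,S)^1$ and $(279,S)^1$ I would use $41_{A_9}\uparrow = 41_{A_{10}}\oplus 90_{A_{10}}\oplus 279_{A_{10}}$; and for $(126,S)^1$ I would use $35_{A_9}\uparrow = 126_{A_{10}}\oplus 224_{A_{10}}$. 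In each case, since $S$ runs over the five simples of $B_1$ and self-duality holds throughout, only finitely many small $A_9$-level computations are needed.

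The tidiest way to organise this is to compute, for each fixed $S\in\{9,36,90,126,279\}$, the full restriction $S\!\downarrow_{A_9}$ (already given in Theorem~\ref{RestrictionA10}) and then evaluate $(7_{A_9}, S\!\downarrow)^1_{A_9}$, $(41_{A_9}, S\!\downarrow)^1_{A_9}$ and $(35_{A_9}, S\!\downarrow)^1_{A_9}$ using the $A_9$ PIM structures in Theorem~\ref{T:A9} (an $\mathrm{Ext}^1(U,V)$ is the multiplicity of $V$ in $L_2(P_U)$, or equivalently can be extracted from $U\uparrow^{A_9}$-type data). For instance $279_{A_{10}}\!\downarrow = \mathfrak{U}(41;35;41)$, so $(41_{A_9}, 279\!\downarrow)^1_{A_9}$ counts extensions of $41_{A_9}$ by the layers $41,35,41$; combined with the already-known values of $(41,S)^1_{A_{10}}$ for $S$ in the principal block this pins down $(279,S)^1_{A_{10}}$ once we subtract the contributions of the other summands $41_{A_{10}}$ and $90_{A_{10}}$. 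The arithmetic is bookkeeping: add up multiplicities across the direct-sum decomposition of the induced module and solve for the unknown term.

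There is one genuine subtlety rather than a hard theorem. Writing $U_{A_9}\uparrow = \bigoplus_i M_i$, Ext Reciprocity only gives $\sum_i (M_i,S)^1_{A_{10}} = (U_{A_9}, S\!\downarrow)^1_{A_9}$, so to isolate the summand I actually want I must already know the $\mathrm{Ext}^1(\,\cdot\,,S)$ values for the \emph{other} summands. This is why the order matters: I would do $(36,S)^1$ only after the principal-block Theorem (which handles $34$), and I would do $(90,S)^1$ and $(279,S)^1$ after knowing $(41,S)^1_{A_{10}}$ for $S\in B_1$ — but $(41, S)^1_{A_{10}}$ for $S$ a $B_1$-simple is $0$ by block orthogonality of $\mathrm{Ext}$ (the $41$-PIM lies in $B_0$), so in fact the coupling between the two blocks never occurs and each induced-module equation has at most one unknown term. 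That observation is what makes the whole lemma ``a similar proof''.

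The step I expect to be the main obstacle is verifying the precise $\mathrm{Ext}^1$ multiplicities at the $A_9$ (and $A_8$) level — i.e., making sure that the relevant simple really does or does not appear in the second Loewy layer of the appropriate $A_9$-PIM, and that no self-extensions slip in. This is where the earlier MeatAxe-backed determination of the $A_9$ Loewy structures (Theorem~\ref{T:A9}) and of $A_8$ (Theorem~\ref{T:A8}) is doing the real work; everything above it is formal. Once those layer multiplicities are taken as given, the five displayed formulas follow by the same mechanical reciprocity-and-subtraction argument as in Lemma~\ref{ext9}, so I would simply state the cases and indicate the one nonzero $A_8$-level $\mathrm{Ext}^1$ that produces each ``$1$''.
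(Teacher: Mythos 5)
Your overall strategy is exactly the ``similar proof'' the paper intends for this lemma: use block theory to pass from the $B_1$-simple to an induced module ($7_{A_9}\uparrow=34\oplus36$, $35_{A_9}\uparrow=126\oplus224$, $41_{A_9}\uparrow=41\oplus90\oplus279$), apply Ext (Eckmann--Shapiro) reciprocity to drop to $A_9$, then use Theorem~\ref{RestrictionA10} together with Theorem~\ref{Induction} and a second reciprocity to land on $(\,\cdot\,,\cdot\,)^1_{A_8}$, read off Theorem~\ref{T:A8}. This works cleanly for the $(36,S)^1$ and $(126,S)^1$ rows, since in those two inductions the only $B_1$-summand is the one you are after.

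The gap is in your claim that ``each induced-module equation has at most one unknown term.'' That is false precisely for the remaining two rows: in $41_{A_9}\uparrow=41\oplus90\oplus279$ \emph{both} $90$ and $279$ lie in $B_1$, so block orthogonality only kills the $(41,S)^1$ term and reciprocity yields the single equation $(90,S)^1+(279,S)^1=(41_{A_9},S\!\downarrow)^1_{A_9}$ with two unknowns; your suggestion to ``subtract the contribution of $90_{A_{10}}$'' to pin down $(279,S)^1$ is circular, since $(90,S)^1$ occurs only in that same equation. The repair uses tools you already list but must be made explicit: for $S\in\{9,36,126\}$ invoke self-duality of the simples, so $(90,S)^1=(S,90)^1$ and $(279,S)^1=(S,279)^1$ are read off the already-established rows for $9$ (Lemma~\ref{ext9}), $36$ and $126$; for $S\in\{90,279\}$ note that $S\!\downarrow$ is, up to summands in non-principal $A_9$-blocks, $28_{A_8}\uparrow$ resp.\ $13_{A_8}\uparrow$, so the reciprocity sum equals $(41_{A_9}\!\downarrow_{A_8},T)^1_{A_8}=(28\oplus13,\,T)^1_{A_8}$ with $T\in\{28,13\}$, which vanishes by Theorem~\ref{T:A8}; non-negativity then forces $(90,S)^1=(279,S)^1=0$. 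With that adjustment your argument does establish all four displayed formulas.
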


\subsection{Principal block}\label{prin}
\begin{Lemma}\label{ext1}
\[
(1,S)^1_{A_{10}}=
\begin{cases}
1, &\text{if $S \in\{34, 41, 84, 224\}$;}\\
0, &\text{otherwise.}
\end{cases}
\]

\[
(34,S)^1_{A_{10}}=
\begin{cases}
1, &\text{if $S \in\{1, 41, 224\}$;}\\
0, &\text{otherwise.}
\end{cases}
\]

\[
(41,S)^1_{A_{10}}=
\begin{cases}
1, &\text{if $S \in\{1, 34, 84, 224\}$;}\\
0, &\text{otherwise.}
\end{cases}
\]

\[
(84,S)^1_{A_{10}}=
\begin{cases}
1, &\text{if $S \in\{1, 41, 84\}$;}\\
0, &\text{otherwise.}
\end{cases}
\]

\[
(224,S)^1_{A_{10}}=
\begin{cases}
1, &\text{if $S \in\{1, 34, 41, 224\}$;}\\
0, &\text{otherwise.}
\end{cases}
\]

\end{Lemma}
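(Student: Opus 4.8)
The plan is to compute each $(S,T)^1_{A_{10}}$ for $S,T$ simple modules in the principal block $B_0$ by the same strategy already used for the non-principal block in Lemma~\ref{ext9}: push the Ext-computation down to $A_9$ (and sometimes further to $A_8$) via Ext Reciprocity, where the answer is readable from the known PIM structures of Theorem~\ref{T:A9} (and Theorem~\ref{T:A8}). Concretely, for a simple $FA_{10}$-module $S$ in $B_0$ I would first find a simple $FA_9$-module $U$ such that $S$ (up to adding modules from other blocks that then vanish under $e_0$) is a summand of $U\uparrow^{A_{10}}_{A_9}$; Theorem~\ref{InductionA10} provides exactly these: $1$ and $9$ come from $1_{A_9}\uparrow$, $34$ and $36$ from $7_{A_9}\uparrow$, $41$, $90$, $279$ from $41_{A_9}\uparrow$, $126$ and $224$ from $35_{A_9}\uparrow$, and $84$ occurs in $21_{A_9}\uparrow=\mathfrak{D}(84;1,41;84)$. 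Then
\[
(S,T)^1_{A_{10}} = (e_0\cdot U\uparrow, T)^1_{A_{10}} = (U\uparrow, T)^1_{A_{10}} = (U, T\downarrow_{A_9})^1_{A_9},
\]
and $T\downarrow_{A_9}$ is given explicitly in Theorem~\ref{RestrictionA10}. The first equality needs $S$ to be the full $B_0$-part of $U\uparrow$; when $U\uparrow$ decomposes as $S$ plus modules lying in $B_1$ or $B_2$ this is immediate, and when $U\uparrow$ is indecomposable (as for $84$, or $41$ if one works with a suitable summand) one instead computes $(U\uparrow,T)^1$ directly and subtracts the contributions of the other composition factors in the head and socle of $U\uparrow$, using self-duality and the already-known extension data.

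The key steps, in order, are: (i) for each of $S\in\{1,34,41,84,224\}$ fix the $A_9$-source module $U$ as above; (ii) for each target $T\in\{1,34,41,84,224\}$ read off $T\downarrow_{A_9}$ from Theorem~\ref{RestrictionA10} — these are $1_{A_9}$, $7_{A_9}\oplus 27_{A_9}$, $41_{A_9}$, $\mathfrak{D}(21;7,35;21)$, $35_{A_9}\oplus 189_{A_9}$ respectively; (iii) evaluate $(U,T\downarrow)^1_{A_9}$ from the $A_9$-PIM structures in Theorem~\ref{T:A9}, reading $(X,Y)^1_{A_9}$ off the second Loewy layer of $P_X$ (equivalently the top of $\mathrm{rad}\,P_X$), and for the uniserial/biserial restricted targets using additivity of $\mathrm{Ext}^1$ in the second variable together with the short exact sequences describing $\mathfrak{D}(-;-,-;-)$ and $\mathfrak{U}(-;-;-)$; (iv) where the source $U\uparrow$ is not block-homogeneous, e.g. $84$ via $21_{A_9}\uparrow=\mathfrak{D}(84;1,41;84)$, compute $(\mathfrak{D}(84;1,41;84),T)^1_{A_{10}}$ from the defining sequence $0\to 84\to \mathfrak{D}\to \mathrm{rad}^{\le1}\to 0$ and the long exact sequence, then isolate the $(84,T)^1$ term using the already-established $(1,T)^1$ and $(41,T)^1$; (v) assemble the five case statements and check consistency with self-duality, i.e. $\dim(S,T)^1_{A_{10}}=\dim(T,S)^1_{A_{10}}$, which the asserted answers visibly satisfy (e.g. $84$ extends $41$ and $41$ extends $84$; $34$ extends $224$ and $224$ extends $34$).

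I expect the main obstacle to be step (iv): handling the cases where the induced module is indecomposable rather than a clean direct sum, so that Ext Reciprocity alone does not immediately return $(S,T)^1$ but rather a sum of Ext-dimensions, from which the desired term must be extracted. This is precisely the situation for $S=84$ (and requires care also in how one extracts $(41,T)^1$ and $(34,T)^1$ if one chooses to work with $41_{A_9}\uparrow$ or $7_{A_9}\uparrow$ as single modules rather than pre-splitting off the other blocks). The extraction relies on knowing the socle and head layers of the induced module — available from Theorem~\ref{InductionA10} — and on the order in which the five Ext-functions are computed: one should determine $(1,-)^1$ first (it feeds everything), then $(34,-)^1$ and $(224,-)^1$, and only then $(41,-)^1$ and $(84,-)^1$, so that each subtraction uses only previously-known quantities. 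A secondary but routine subtlety is that some targets restrict to non-semisimple $A_9$-modules ($84\downarrow$, $224\downarrow$ partially), so one must use the short exact sequences for $\mathfrak{D}$ and for the uniserial restrictions to reduce $(U,T\downarrow)^1_{A_9}$ to Ext-groups between $A_9$-simples, which are then visible in Theorem~\ref{T:A9}; here one must also check that the relevant connecting maps $\mathrm{Hom}\to\mathrm{Ext}^1$ vanish, which follows from comparing composition factors and socles.
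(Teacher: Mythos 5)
Your overall strategy for the ``easy'' entries is the same as the paper's: by block theory and Ext reciprocity, $(S,T)^1_{A_{10}}=(U,T\downarrow_{A_9})^1_{A_9}$ for the appropriate $A_9$-source $U$ from Theorem~\ref{InductionA10}, and these values are read off from the known $A_9$ (and $A_8$) projective structures exactly as in the proof of Lemma~\ref{ext9}. That part is fine. The genuine gap is in your step (iv), i.e. the whole $84$ row/column, which the paper itself singles out as the only difficulty and which your proposed remedy does not actually close. Computing $(\mathfrak{D}(84;1,41;84),T)^1_{A_{10}}$ (equivalently, via reciprocity, $(\mathfrak{D}(21;7,35;21),T')^1_{A_9}$) and then ``isolating'' $(84,T)^1$ by subtracting $(1,T)^1$ and $(41,T)^1$ only works if the relevant long exact sequences split into short exact pieces; in reality the connecting maps $\mathrm{Hom}\to\mathrm{Ext}^1$ and the $\mathrm{Ext}^2$ terms are unknown at this stage, so the formal manipulation yields only inequalities. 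This is precisely what the paper obtains: $(84,84)^1_{A_{10}}\ge 1$, $(84,34)^1_{A_{10}}\le 1$, $(84,224)^1_{A_{10}}\le 1$, and nothing sharper.

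What actually pins the values down in the paper is extra structural input that your proposal never invokes. For $(84,1)^1=(84,41)^1=1$ the paper analyses the three possible module diagrams extending $\mathfrak{D}(21;7,35;21)$, and eliminates two of them using the uniqueness of the copy of $1$ in $L_3(P_{21_{A_9}})$ together with the MeatAxe-computed submodule lattice of $21\uparrow_{A_8}^{A_9}.f_0$; PIM Loewy layers alone do not decide which extension is realised. For $(34,84)^1=(224,84)^1=0$ and $(84,84)^1=1$, the determination is deliberately deferred: it falls out of the explicit Loewy structures of $P_{34}$, $P_{224}$ (the filtrations of $P_{27_{A_9}}\uparrow.e_0$ and $P_{189_{A_9}}\uparrow.e_0$ in Figures 1 and 2, themselves resting on the MeatAxe structures of $27\uparrow$ and $189\uparrow$) and of $P_{84}$ (Section~\ref{p84}). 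So your ordering ``do $(1,-)^1$, $(34,-)^1$, $(224,-)^1$ first, then subtract'' cannot be carried out as stated: the values $(34,84)^1$ and $(224,84)^1$ you would need to feed into the subtraction are exactly the ones that are not computable by reciprocity plus Loewy-layer reading, because $84\downarrow_{A_9}=\mathfrak{D}(21;7,35;21)$ is not (a block component of) a module induced from $A_8$, and its Ext groups against $A_9$-simples are not determined by composition series data alone. To repair the argument you would need either the MeatAxe submodule-lattice input or the later projective-cover computations, as the paper does.
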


\begin{proof}

The difficulty arises only when computing $(84,S)^1_{A_{10}}$ (noting that, since all the simple $FA_{10}$-modules  are self-dual, $(S,T)^1_{A_{10}} = (T,S)^1_{A_{10}} $). In all other cases, a similar proof to that in the case of non-principal blocks will suffice. 

By block theory, Theorem~\ref{T:A8}, Theorem~\ref{Induction}, Theorem~\ref{RestrictionA10} and Theorem~\ref{InductionA10},
\[
(84,1)^1_{A_{10}} = (84, 1\uparrow_{A_9}^{A_{10}})^1_{A_{10}} =(84\downarrow_{A_9},1_{A_9})^1_{A_9}= ( \mathfrak{D}(21; 7, 35; 21 ), 1)^1_{A_9}.
\]
Furthermore, Theorem~\ref{T:A9} implies that there are only three possibilities:
\begin{center}
\parbox{1in}
{\begin{center}
$21$
 
 \vspace{10pt}
$7 \quad 35 $

\vspace{10pt}
$\ \ 1 \quad21$
\end{center}} 
\ ,  \
\parbox{1in}
{\begin{center}
$21$

\vspace{10pt}
$7 \quad 35 $

\vspace{10pt}
$1 \quad 21$
\end{center}} 
\ , \ 
\parbox{1in}
{\begin{center}
$21$

\vspace{10pt}
$7 \quad 35 $

\vspace{10pt}
$21 \quad 1$
\end{center}} 
\put(-217,20){\line(3,-5){6.5}}      
\put(-217,20){\line(-3,-5){6.5}}     
\put(-227,-3){\line(0,-5){10}}        
\put(-205,-3){\line(0,-5){10}}        
\put(-210,-3){\line(-3,-5){9.5}}      
\put(-222,-3){\line(3,-5){9.5}}        
\put(-40,20){\line(-3,-5){6.5}}      
\put(-130,20){\line(-3,-5){6.5}}     
\put(-35,20){\line(3,-5){6.5}}         
\put(-125,20){\line(3,-5){6.5}}       
\put(-30,-3){\line(-3,-5){6.5}}         
\put(-130,-3){\line(3,-5){7.5}}         
\put(-120,-3){\line(0,-5){10}}         
\put(-138,-3){\line(0,-5){10}}          
\put(-25,-3){\line(0,-5){10}}             
\put(-48,-3){\line(0,-5){10}}              
\end{center}
 However, there is a unique copy of $1$ in $L_3(P_{21_{A_9}})$ and the first structure exists by the submodule lattice of $21\uparrow_{A_8}^{A_9}.f_0$ obtained by the MeatAxe. Therefore, we see that the other two cases cannot occur (see \ref{p84}).

The same proof gives $(84,41)^1_{A_{10}}=1$.

Now using appropriate long exact sequences of cohomology, we have
\[
(84,84)^1_{A_{10}} \geq 1;\quad
(84,34)^1_{A_{10}} \leq 1;\quad
(84,224)^1_{A_{10}} \leq 1;
\]

We will finish determining the dimensions of the above Ext-spaces in Section~\ref{p34p224} and Section~\ref{p84}.
\end{proof}

\section{Induction of 2-step $FA_9$ - modules}\label{2step}
In this section we shall induce to $A_{10}$ each of the non-trivial extensions of a simple module by a simple module for $A_9$.

By Frobenius reciprocity, we have

$\left(
\parbox{0.25in}
{\begin{center}
$1$

$7$
\end{center}}\right)\uparrow^{A_{10}}_{A_9}  .e_0
=$\parbox{0.25in}
{\begin{center}
$1$

$34$
\end{center}}
\ , \
$\left(
\parbox{0.25in}
{\begin{center}
$1$

$41$
\end{center}}\right)\uparrow^{A_{10}}_{A_9}  .e_0
=$\parbox{0.25in}
{\begin{center}
$1$

$41$
\end{center}}
\ , \
$\left(
\parbox{0.25in}
{\begin{center}
$1$

$35$
\end{center}}\right)\uparrow^{A_{10}}_{A_9}  .e_0
=$\parbox{0.25in}
{\begin{center}
$1$

$224$
\end{center}}
\ , \

\vspace{15pt}

$\left(
\parbox{0.25in}
{\begin{center}
$7$

$35$
\end{center}}\right)\uparrow^{A_{10}}_{A_9} .e_0
=$\parbox{0.25in}
{\begin{center}
$34$

$224$
\end{center}}
\ , \
$\left(
\parbox{0.25in}
{\begin{center}
$7$

$41$
\end{center}}\right)\uparrow^{A_{10}}_{A_9}  .e_0
=$\parbox{0.25in}
{\begin{center}
$34$

$41$
\end{center}}
\ , \

\vspace{15pt}

$\left(
\parbox{0.25in}
{\begin{center}
$35$

$35$
\end{center}}\right)\uparrow^{A_{10}}_{A_9}  .e_0
=$\parbox{0.25in}
{\begin{center}
$224$

$224$
\end{center}}
\ , \
$\left(
\parbox{0.25in}
{\begin{center}
$35$

$41$
\end{center}}\right)\uparrow^{A_{10}}_{A_9}  .e_0
=$\parbox{0.25in}
{\begin{center}
$224$

$41$
\end{center}}
\ , \

\vspace{15pt}

$\left(
\parbox{0.25in}
{\begin{center}
$21$

$7$
\end{center}}\right)\uparrow^{A_{10}}_{A_9}  .e_0
=$\parbox{0.5in}
{\begin{center}
$84$

$1 \quad 41$

$84 \quad 34$
\end{center}}
\ , \
$\left(
\parbox{0.25in}
{\begin{center}
$21$

$35$
\end{center}}\right)\uparrow^{A_{10}}_{A_9}  .e_0
=$\parbox{0.5in}
{\begin{center}
$84$

$1 \quad 41$

$84 \quad 224$
\end{center}}
\ . \

\vspace{15pt}
\noindent
 We shall determine the structure of $\left(
\parbox{0.25in}
{\begin{center}
$27$

$189$
\end{center}}\right)\uparrow^{A_{10}}_{A_9}.e_0 $
and
$\left(
\parbox{0.25in}
{\begin{center}
$189$

$27$
\end{center}}\right)\uparrow^{A_{10}}_{A_9}.e_0 $
in Section~\ref{PrinPim}.
\section{Structure of projective modules for $FA_{10}$}\label{pimA10}

\subsection{The non-principal block}\label{NonPrinPim} We will use Section~\ref{non-prin} and self-duality to find the structure of $P_{90}, P_{279}, P_9 $ and $P_{36}$. For $P_{126}$, we will additionally need Landrock's lemma.

The Cartan matrix shows that $P_{90_{A_{10}}}$ has composition factors $9+ 36+3(90) + 2(126) $.
  Also, it has both head and socle isomorphic to $90$ i.e.$\  P_{90_{A_{10}}} =$ \parbox{0.35in}
{\begin{center}
$90$

$X $

$90$
\end{center}} where $X$ has composition factors $9+ 36+ 90 + 2(126) $.
Section~\ref{non-prin} and self-duality shows that $L_1(X)=S_1(X)=126$. Hence, $X=$ \parbox{0.7in}
{\begin{center}
$126$

$9 \quad 36 \quad 90 $

$126$
\end{center}} , and $P_{90_{A_{10}}}$ has Loewy structure as claimed in Theorem~\ref{T:Main2}.

$\bullet \ \text{Similarly}, \ P_{279_{A_{10}}} =$ \parbox{0.35in}
{\begin{center}
$279$

$Y $

$279$
\end{center}} where $Y$ has composition factors $2(9) + 2(36) + 126 + 279$, and  $L_1(Y)=S_1(Y)= 9 \oplus36$. Therefore, the structure of Y can only be one of the following:
\begin{center}
\parbox{0.9in}
{\begin{center}
$S$

$126 \quad 279  \quad T$

$S$ 
\end{center}} $\oplus T$ \quad or \quad
\parbox{0.9in}
{\begin{center}
$9 \quad 36$

$ 126 \quad 279$

$9 \quad 36$

\end{center}}.
\end{center}

Here $\{S, T\} = \{9, 36\}$. But the first case cannot happen since $(S,T)^1_{A_{10}} = 0$ by 
Section~\ref{non-prin}. Thus, the structure of $P_{279_{A_{10}}} $ is as claimed in Theorem~\ref{T:Main2}.

$\bullet$\ We will work out in details the structure of $P_{9_{A_{10}}}$. The structure of  $P_{36_{A_{10}}}$ can be found in a similar way.

The Cartan matrix shows that $P_{9_{A_{10}}}=$\parbox{0.35in}
{\begin{center}
$9$

$Z $

$9$
\end{center}}  where $Z$ has composition factors $2(9) + 36 +90 +2(126) + 2(279)$. It is easy to see that $L_1(Z)=S_1(Z)= 126 \oplus279$. Consequently, the structure of $Z$ can only be one of the following:

\parbox{10in}{
\parbox{1.22in}
{\begin{center}
$Q$

$9 \quad 9  \quad36 \quad 90 \quad R $

$Q$ 
\end{center}}  $\oplus R$ \  or \ 
\parbox{0.8in}
{\begin{center}
$Q$

$9$

$36 \quad 90 \quad R $

$9$

$Q$ 
\end{center}} $\oplus R$ \  or \ 
\parbox{1in}
{\begin{center}
$126 \quad 279$

$9 \quad9 \quad 36 \quad 90 $

$126 \quad 279$

\end{center}}\ or \
\parbox{0.8in}
{\begin{center}
$126 \quad 279$

$9$

$ 36 \quad 90 $

$9$

$126 \quad 279$

\end{center}}.}

Here $\{R, Q\} = \{126, 279\}$. But the first case cannot happen since $(R,Q)^1_{A_{10}} = 0$, while the second and the fourth are impossible since $(9,36)^1_{A_{10}}=0$  by 
Section~\ref{non-prin}. Hence, the structure of $P_{9_{A_{10}}}$ is as claimed in Theorem~\ref{T:Main2}.

$\bullet$\ By Landrock's lemma and the structure of the PIMs obtained so far, 
$P_{126_{A_{10}}}$ has one copy of $9$, one copy of $36$ and one copy of $90$ in $L_2(P_{126_{A_{10}}})$ and $L_4(P_{126_{A_{10}}})$ each. It also has one copy of  $279$ in $L_3(P_{126_{A_{10}}})$. Now self-duality forces $P_{126_{A_{10}}}$ to has the claimed structure in Theorem~\ref{T:Main2}.

 \subsection{The principal block.}\label{PrinPim}
\subsubsection{ The Loewy structure of $P_{34}$ and $P_{224}$}\label{p34p224}
Using Theorem ~\ref{InductionA10}, we have the following filtrations:

$P_{34_{A_{10}}}= P_{27_{A_9}}\uparrow^{A_{10}}.e_0 =\left(
\parbox{0.35in}
{\begin{center}
$27$

$189$

$27$
\end{center}}\right)\uparrow.e_0$ 

$\quad \sim \quad$
\put(40,45){\line(0,-100){100}}
\put(150,25){\line(0,-90){90}}
\parbox{0.45in}
{\begin{center}
$34$

$1 \quad 41$

$34 \quad 84 $

$1 \quad 41$

$34$
\end{center}}
\quad 
\parbox{1.3in}
{\rule{0in}{0.25in}\begin{center}
$224^*$

$1_a \quad 41_a \quad 224_a$

$1_b \quad 224_b \quad34_b \quad 84_b$

$1_c \quad 41_c \quad 224_c$

$224$
\end{center}}
\quad
\parbox{0.5in}{\rule{0in}{0.5in}
\begin{center}
$34_a$

$1_f \quad 41_f$

$34_f \quad 84_f $

$1 \quad 41$

$34$
\end{center}}

\begin{center}
Figure $1$.
\end{center}

$P_{224_{A_{10}}}= P_{189_{A_9}}\uparrow^{A_{10}}.e_0 =\left(
\parbox{0.35in}
{\begin{center}
$189$

$27$

$189$
\end{center}}\right)\uparrow.e_0$ 

$\quad \sim \quad$
\put(100,45){\line(0,-100){100}}
\put(155,25){\line(0,-90){90}}
\parbox{1.25in}
{\begin{center}
$224$

$1\quad 41 \quad 224$

$1 \quad 224 \quad34\quad 84$

$1 \quad 41 \quad 224$

$224_f$
\end{center}}
\quad 
\parbox{0.5in}
{\rule{0in}{0.25in}\begin{center}
$34$

$1_e \quad 41_e$

$34_d\quad 84_d $

$1 \quad 41$

$34_e$
\end{center}}
\quad
\parbox{1.45in}{\rule{0in}{0.5in}
\begin{center}
$224_d$

$1 \quad 41 \quad 224_g$

$1_h \quad 224_h \quad34_g \quad 84_h$

$1_g \quad 41_g \quad 224_e$

$224$
\end{center}}\ .
\begin{center}
Figure $2$.
\end{center}
Figures $1$ and $2$ give $(34,84)^1_{A_{10}}=(224,84)^1_{A_{10}}=0$ as claimed in Lemma~\ref{ext1}.

Section~\ref{prin} implies $224^* \in  L_2(P_{34_{A_{10}}}) $.
Figure $2$ and Section~\ref{prin} show that there is exactly one copy of $34$ in $L_3(P_{224_{A_{10}}})$. Hence, in Figure $1$, $224_a \in L_3(P_{34_{A_{10}}})$ by Landrock's lemma.  Moreover, $34_b$ extends $224_a$ by the structure of $189\uparrow$, and hence it can only be in $L_4(P_{34_{A_{10}}}) , L_5(P_{34_{A_{10}}}) $ or $L_6(P_{34_{A_{10}}}) $. $(34,S)^1_{A_{10}}$ implies it can only be in $L_4(P_{34_{A_{10}}})$.

This, in turn, forces $1_a, 41_a \in L_3(P_{34_{A_{10}}})$ and
\[
1_c, 41_c, 224_c \notin L_6(P_{34_{A_{10}}})
\tag{*}
\]
by the structure of $189\uparrow$ and Section~\ref{prin} respectively.

Now $84_b$ is not in  either $L_5(P_{34_{A_{10}}})$ or $L_6(P_{34_{A_{10}}})$ by Section~\ref{prin} and the Benson-Carlson diagram of $189\uparrow$, so it has to be in $ L_4(P_{34_{A_{10}}})$.
Moreover, Section~\ref{prin} and $189\uparrow$ also imply $224_b \notin L_5(P_{34_{A_{10}}})$. Also, $224_b \notin L_6(P_{34_{A_{10}}})$: otherwise, there exists a submodule $U$  of $\left(
\parbox{0.25in}
{\begin{center}
$27$

$189$
\end{center}}\right)\uparrow^{A_{10}}_{A_9} $
of Loewy length $4$ with simple head $34$ and simple socle $224$. Hence, by duality, there exists a subquotient $U^*$ of $\left(
\parbox{0.25in}
{\begin{center}
$189$

$27$
\end{center}}\right)\uparrow^{A_{10}}_{A_9}$ 
with simple head $224$, simple socle $34$ and Loewy length $4$. This means that, in Figure $2$, $34_d \in L_4(P_{224_{A_{10}}})$, since Section~\ref{prin} forces $224_d\in L_i(P_{224_{A_{10}}})$ for $i\geq 3$ so the diagram of $189\uparrow$ implies $34_g\notin L_4(P_{224_{A_{10}}})$. This in turn forces $1_e,41_e \in L_3(P_{224_{A_{10}}})$. Hence, $84_d \in L_4(P_{224_{A_{10}}})$ since $(224,84)^1_{A_{10}}=0$. Therefore, we have
\[
\text{Loewy length of } \left(
\parbox{0.25in}
{\begin{center}
$189$

$27$
\end{center}}\right)\uparrow^{A_{10}}_{A_9} .e_0
\leq 7 
< 8 
\leq \text{Loewy length of} 
\left(\parbox{0.25in}
{\begin{center}
$27$

$189$
\end{center}}\right)\uparrow^{A_{10}}_{A_9} .e_0 ,
\]
a contradiction. Therefore, $224_b \in L_4(P_{34_{A_{10}}})$.

Next, Landrock's Lemma implies that $34_d \in L_4(P_{224_{A_{10}}})$. The same argument as above shows that Loewy length of $\left(
\parbox{0.25in}
{\begin{center}
$189$

$27$
\end{center}}\right)\uparrow^{A_{10}}_{A_9} .e_0$ is at most $7$, hence $1_b \notin L_6(P_{34_{A_{10}}})$. Moreover, $1_b \notin L_5(P_{34_{A_{10}}})$ by the Benson-Carlson diagram of $189\uparrow.e_0$ and Section~\ref{prin}. Therefore,  $1_b \in L_4(P_{34_{A_{10}}})$. Now (*) gives
\[
\left(\parbox{0.25in}
{\begin{center}
$27$

$189$
\end{center}}\right)\uparrow^{A_{10}}_{A_9} .e_0=
\parbox{1.9in}
{\begin{center}
$34$

$1 \quad 41 \quad 224$

$34 \quad 84 \quad 1 \quad 41 \quad 224$

$1 \quad 41 \quad 1 \quad 224 \quad 34 \quad 84$

$34 \quad 1 \quad 41 \quad 224$

$224$
\end{center}}.
\]
This has Loewy length $6$, so Loewy length of $\left(\parbox{0.25in}
{\begin{center}
$189$

$27$
\end{center}}\right)\uparrow^{A_{10}}_{A_9}.e_0 = 6$. This implies $34_e \in  L_6(P_{224_{A_{10}}})$. The diagram of $27\uparrow$ now gives

\[
\left(\parbox{0.25in}
{\begin{center}
$189$

$27$
\end{center}}\right)\uparrow^{A_{10}}_{A_9}.e_0 =
\parbox{1.8in}
{\begin{center}
$224$

$1 \quad 41 \quad 224 \quad 34$

$1 \quad 224 \quad 34 \quad 84 \quad 1 \quad 41$

$1 \quad 41 \quad 224  \quad 34 \quad 84$

$224 \quad 1 \quad 41$

$34$
\end{center}}.
\]

Now, in Figure $1$, $34_a \notin L_2(P_{34_{A_{10}}})$ by Section~\ref{prin}. In fact, it is in $L_3(P_{34_{A_{10}}})$, otherwise, it must extend (and indeed lie below)  some composition factors in the submodule \parbox{0.45in}
{\begin{center}
$1 \quad 41$

$34$
\end{center}}
of $27\uparrow$, which is impossible.

Also, Section~\ref{prin} implies that $34_f, 84_f \notin L_6(P_{34_{A_{10}}})$; and they are not in $L_7(P_{34_{A_{10}}})$ either, because the diagram of $27\uparrow$ and Section~\ref{prin} imply $1_f, 41_f \notin L_6(P_{34_{A_{10}}})$.
Thus, $34_f, 84_f \in L_5(P_{34_{A_{10}}})$. This, in turns, forces $1_f, 41_f\in L_4(P_{34_{A_{10}}})$. Therefore, we obtain the structure of $P_{34_{A_{10}}}$ as claimed in Theorem~\ref{T:Main1}.

Finally we compute the structure of $P_{224_{A_{10}}}$. By Landrock's lemma, there is one copy of $34$ in $L_5(P_{224_{A_{10}}})$, so $34_g \in L_5(P_{224_{A_{10}}})$. Thus, $224_d \in L_3(P_{224_{A_{10}}})$ by the diagram of $189\uparrow$. Also, since $84$ does not extend $224$, $84_h \in L_4(P_{224_{A_{10}}})$.

 We now have $1_h \in L_5(P_{224_{A_{10}}})$ since, otherwise, the structure of $\left(\parbox{0.25in}
{\begin{center}
$27$

$189$
\end{center}}\right)\uparrow^{A_{10}}_{A_9}.e_0 $
implies that it must extend $224_f$. This means there exists a module V with Loewy length $4$ with unique head $224$ and unique socle $224$ and whose head  $224$ extends $1$ in  $L_2(V)$. By duality again, $1_h$ extends $224_e$, which implies that $224_g$ is extended by $1_h$. Thus, $1_h$ extends both $224_f $ and $224_g$, contradicting Section~\ref{prin}.
A similar argument shows that $224_h \in L_5(P_{224_{A_{10}}})$ and finally the structure of $P_{224_{A_{10}}}$ is as claimed in Theorem~\ref{T:Main1}.
\subsubsection{The Loewy structure of $P_{84}$.}\label{p84} 
We have

\vspace{10pt}
$\left(
\parbox{0.4in}
{\begin{center}
$7$

$1 \quad 21$

$7$
\end{center}}\right)\uparrow^{A_{10}}_{A_9}.e_0 
\ \sim$
\parbox{1.1in}
{\begin{center}
$34$
\parbox{1.1in}{\rule[.1mm]{1in}{0.01in}
\begin{center}
$1 \oplus \parbox{0.4in}
{\begin{center}
$84$

$1 \quad 41$

$84$
\end{center}}$

\end{center}}
\parbox{1.1in}{\rule[.1mm]{1in}{0.01in}
\begin{center}
$34$

\end{center}}
\end{center}}
$=$ 
 \parbox{.9in}
{\begin{center}
$34 \quad 84$

\vspace{10pt}
$1 \quad 41 \quad 1 $

\vspace{10pt}
$34 \quad 84 $
\end{center}}.
\put(-35,-5){\line(3,-5){6}}           
\put(-55,-5){\line(5,-6){9}}           
\put(-27,20){\line(5,-6){9}}         
\put(-50,20){\line(-2,-3){7}}     
\put(-18,-5){\line(-2,-3){7}}      
\put(-40,-5){\line(-2,-3){7}}       
\put(-27,20){\line(-2, -3){7}}      
\put(-45,20){\line(5,-6){9}}       

\noindent
by Frobenius Reciprocity, Section~\ref{ext} and Section~\ref{2step}.

Similarly,

\vspace{10pt}
$\left(
\parbox{0.2in}
{\begin{center}
$41$

$35$

$41$
\end{center}}\right)\uparrow^{A_{10}}_{A_9} .e_0=$
\parbox{0.2in}
{\begin{center}
$41$

$224$

$41$
\end{center}} ,
$\left(
\parbox{0.45in}
{\begin{center}
$35$

$21 \quad 35$

$35$
\end{center}}\right)\uparrow^{A_{10}}_{A_9} .e_0=$
\parbox{0.9in}
{\begin{center}
$224 \quad 84$

$224 \quad1 \quad 41$

$224 \quad 84$
\end{center}},

\vspace{10pt}
$\left(
\parbox{0.15in}
{\begin{center}
$1$

$7$

$1$
\end{center}}\right)\uparrow^{A_{10}}_{A_9} .e_0=$
\parbox{0.3in}
{\begin{center}
$41$

$224$

$41$
\end{center}},
$\left(
\parbox{0.3in}
{\begin{center}
$41$

$1 \quad7$

$41$
\end{center}}\right)\uparrow^{A_{10}}_{A_9} .e_0=$
\parbox{0.45in}
{\begin{center}
$41$

$1 \quad 34$

$41$
\end{center}}.

Furthermore, we have the following filtration  ((3) in ~\citep{sie}):

\vspace{10pt}
$P_{7_{A_{9}}}= P_{28_{A_8}}\uparrow^{A_{9}}.f_0$

$\sim \ $
\parbox{0.37in}
{\begin{center}
$7$

$1 \quad 21 $

$7  $

\end{center}}
\quad
\put(3,45){\line(0,-100){110}}
\parbox{.3in}
{\rule{0in}{0.3in}\begin{center}
$41$

$35$

$41$
\end{center}}
\parbox{.15in}
{\rule{0in}{0.3in}\begin{center}
$\oplus$
\end{center}}
\parbox{.4in}
{\rule{0in}{0.3in}\begin{center}
$35$

$21 \quad35$

$35$
\end{center}}
\quad
\put(8,25){\line(0,-100){90}}
\parbox{0.6in}{\rule{0in}{0.63in}
\begin{center}
$7$

$1 \quad 21$

$7 $
\end{center}} 
\parbox{0.15in}{\rule{0in}{0.63in}
\begin{center}
$\oplus$
\end{center}}
\parbox{0.3in}{\rule{0in}{0.63in}
\begin{center}
$7$

$1 \quad 21$

$7 $
\end{center}}
\parbox{0.2in}{\rule{0in}{0.63in}
\begin{center}
$\oplus$\
\end{center}}
\parbox{0.13in}{\rule{0in}{0.63in}
\begin{center}
$1$

$7$

$1 $
\end{center}}
\parbox{0.2in}{\rule{0in}{0.63in}
\begin{center}
$\oplus$\
\end{center}}
\parbox{0.4in}{\rule{0in}{0.63in}
\begin{center}
$41$

$1 \quad 7$

$41 $
\end{center}}
\quad
\put(0,15){\line(0,-90){80}}
\parbox{0.3in}{\rule{0in}{0.97in}
\begin{center}
$41$

$35$

$41 $
\end{center}} 
\parbox{0.2in}{\rule{0in}{0.97in}
\begin{center}
$\oplus$
\end{center}}
\parbox{0.5in}{\rule{0in}{0.97in}
\begin{center}
$35$

$21 \quad 35$

$35 $
\end{center}} 
\put(0,5){\line(0,-90){70}}
\parbox{0.35in}{\rule{0in}{1.35in}
\begin{center}
$7$

$1 \quad 21$

$7 $
\end{center}}

Finally, the structure of $P_{34_{A_{10}}}$ together with the above data gives the following Loewy structure:

\vspace{10pt}
$P_{34_{A_{10}}}\oplus P_{84_{A_{10}}}= P_{7_{A_9}}\uparrow^{A_{10}}.e_0 $

\vspace{10pt}
$=$
\parbox{4.6in}
{\begin{center}
$34 \quad 84$

$1 \quad 41 \quad 1 \quad 41 \quad 224 \quad 84$

$34 \quad 84 \quad 224 \quad 224 \quad 1 \quad 41 \quad 34 \quad 84 \quad 34 \quad 84 \quad 1 \quad 41$

$41 \quad 224 \quad 84 \quad 1 \quad 41 \quad 1 \quad1 \quad 41 \quad 1 \quad34 \quad 1 \quad 34 \quad 41 \quad 224\quad 84 $

$\quad 34 \quad 84 \quad 34 \quad 84 \quad 1 \quad 1 \quad 224 \quad 224\quad1 \quad 41 \quad34 \quad84$

$41 \quad224 \quad84 \quad1 \quad41 \quad1$

$34 \quad84$
\end{center}}.

\noindent
Thus, the structure of $P_{84_{A_{10}}}$ is as claimed in Theorem~\ref{T:Main1} and $(84,84)^1_{A_{10}}=1$ as claimed in Section~\ref{ext}.

\subsubsection{The Loewy structure of  $P_{41}$.}\label{p41}

Landrock's Lemma, Section~\ref{p34p224}, Section~\ref{p84} and the filtration of $P_{41_{A_9}}\uparrow^{A_{10}}.e_0$ give the positions of the unlabelled simple modules and $1_a$ in the Loewy structure for $P_{41_{A_{10}}}$ as follows:
{\begin{center}
$41$

$1_a \quad 34 \quad 84 \quad 224$

$34\quad 224  \quad84 \quad 1_A \quad 41_A  \quad 41_D \quad 41_E \quad 41_F$

$34 \quad 34 \quad 224 \quad 224 \quad 84 \quad 84 \quad 1_B \quad 41_B \quad 1_D \quad 1_H$

$34 \quad224 \quad 84  \quad 1_ C\quad 41_C \quad41_G \quad 1_G \quad 41_H $

$34 \quad 224 \quad 84 \quad 1_E$

$1_F \quad 41_E$

$41_F$
\end{center}}

\noindent
Since we have accounted for all the $84$s by Landrock's Lemma and the $84$s all come from $\mathfrak{D}(84;1;41;84)$, $1_A, 41_A, 1_B, 41_B, 1_C, 41_C$ are in the respective Loewy layers as above.

Next, from ~\citep{sie}, we see that

\begin{center}
$\left(
\parbox{0.25in}
{\begin{center}
$13$

$1$
\end{center}}\right)\uparrow^{A_{9}}_{A_8}  .f_0
=$\parbox{0.5in}
{\begin{center}
$41$

$35 \quad 1$

$41 \quad 7$

$1$
\end{center}}.
\end{center}
\noindent
Thus,
$M := \left(\left(
\parbox{0.25in}
{\begin{center}
$13$

$1$
\end{center}}\right)\uparrow^{A_{9}}_{A_8}  .f_0\right)\uparrow^{A_{10}}
= $\parbox{0.7in}
{\begin{center}
$41$

$224 \quad 1$

$41_a \quad 34_a$

$1_B$
\end{center}},
since Frobenius Reciprocity  and Lemma~\ref{ext1} implies  $S_1(M)=1$  and $41_a, 34_a\notin L_1(M), L_2(M)$.  
Therefore, we get a copy of $41$ and a copy of 1 in $L_3(P_{41_{A_{10}}})$ that do not come from $21\uparrow_{A_9}^{A_{10}}=\mathfrak{D}(84;1;41;84)$, say $41_D$ and $1_D$ respectively.

Next, Frobenius Reciprocity gives
\begin{center}
$\left(\parbox{0.35in}
{\begin{center}
$41$

$1 \quad 7$

$41$
\end{center}}\right)\uparrow_{A_9}^{A_{10}}.e_0=$
\parbox{0.45in}
{\begin{center}
$41$

$1 \quad 34$

$41$
\end{center}}.
\end{center}

\noindent
so $1_a$ extends a $41_E$ in $L_3(P_{41_{A_{10}}})$, and $41_E \neq 41_A, 41_D$ since $13\uparrow_{A_8}^{A_9}=$
\parbox{0.25in}
{\begin{center}
$41$

$35$

$41_1$
\end{center}} 
and there exists an $FA_9$-module 
\parbox{0.35in}
{\begin{center}
$41$

$1 \quad 7$

$41_2$
\end{center}}. Hence, $41_1\neq 41_2$ because there is only one copy of $35$ in$L_2(P_{41_{A_{9}}})$. Thus, $41_E$ comes from $41_2\uparrow_{A_9}^{A_10}$, $41_D$ comes from $41_1\uparrow_{A_9}^{A_10}$ and $41_A$ comes from $\mathfrak{D}(84;1;41;84)$.

Note that all the factors in $L_2(P_{41_{A_{10}}})$ extend some factors in $L_3(P_{41_{A_{10}}})$ so self-duality implies that the layer immediately above the unique bottom $41$ in $P_{41_{A_{10}}}$ is
\[
1 \quad 34 \quad 84 \quad 224
\]

\noindent
Therefore, the Loewy length of $P_{41_{A_{10}}}$ is $7$ and there is exactly one copy of $1$ in $L_6(P_{41_{A_{10}}})$, say $1_E$. We can also fill in the bottom $41$, say $41_F$.

Now $41_D$ extends $1_D$ so by self-duality and the submodule lattice of  $P_{41_{A_{10}}}$, we must have another copy of $41$ in $L_5(P_{41_{A_{10}}})$, say $41_G$.

Furthermore, since 
\[
M.rad(M)^r\subseteq soc(M)^{n-r} 
\tag{*}
\]

 for any module $M$ with Loewy length $n$ and there is no other copy of $1$ in  $L_2(P_{41_{A_{10}}})$, no other copy of $1$ appears in $L_5(P_{41_{A_{10}}})$. By the Cartan's matrix, we need to account for another copy of $41$ and three other copies of $1$.

Now inducing the first three Loewy layers of $P_{41_{A_9}}$ to $A_{10}$, we get the following filtration:

$W= \left(
\parbox{1.7in}
{\begin{center}
$41$

$1 \quad7 \quad 35$

$1 \quad 7 \quad 21 \quad 41 \quad 41 \quad 35$
\end{center}}\right)\uparrow^{A_{10}}_{A_9} .e_0
\sim$
\parbox{3in}
{\begin{center}
$41$

$1 \quad 34 \quad 224$
\parbox{3in}{\rule[.1mm]{3in}{0.01in}
\begin{center}
$1 \oplus  34  \oplus \parbox{0.6in}
{\begin{center}
$84$

$1\quad 41$

$84$
\end{center}} 
\oplus 41 \oplus 41  \oplus 224$
\end{center}}
\end{center}}

$=$
\parbox{3.3in}
{\begin{center}
$41$

$1 \quad 34 \quad 224 \quad 84$

$1_F \quad 34 \quad 1 \quad 41 \quad 41 \quad 41 \quad 224 $

$84$
\end{center}}.

\noindent
by the first three layers of $P_{41_{A_{10}}}$ obtained so far and that $1_F$ does not extend any modules in $L_3(P_{41_{A_{10}}})$ or $\mathfrak{D}(84;1;41;84)$ in $W$.

Next, by Section~\ref{A8A9}, Theorem ~\ref{InductionA10} and Frobenius Reciprocity, we have

$ X= \left(
\parbox{0.35in}
{\begin{center}
$28$

$ 35$

$1 $
\end{center}}\right)\uparrow_{A_8}^{A_9}.f_0
=
\left(
\parbox{1in}
{\begin{center}
$41$

$35 \quad 1 \quad 7$

$21 \quad 35 \quad 41 \quad 1$

$35 \quad 7$

$1$

\end{center}}
\right)$
and 
$X\uparrow^{A_{10}}.e_0= \parbox{1.5in}
{\begin{center}
$41$

$224 \quad 1 \quad 34 \quad 84$

$224 \quad 41 \quad 1 \quad 1 \quad 41$

$34_a \quad 224_a \quad 84$

$1$

\end{center}},
 $
since $34_a, 224_a \in L_4$. Note that all the $34$s and $224$s in $L_2(P_{41_{A_{10}}})$ and $L_3(P_{41_{A_{10}}})$ have now been accounted for.
Therefore, there is a copy of $1$ in $L_5(P_{41_{A_{10}}})$, say $1_G$.

Similarly,
$ Y= \left(
\parbox{0.35in}
{\begin{center}
$28$

$ 35$

\end{center}}\right)\uparrow_{A_8}^{A_9}.f_0
=
\left(
\parbox{1in}
{\begin{center}
$41$

$35 \quad 1 \quad 7$

$21 \quad 35 \quad 41$

$35 $

\end{center}}
\right)$
\put(-30,-10){\line(-2,-3){7}}  
and 
$Y\uparrow^{A_{10}}.e_0= \parbox{1.5in}
{\begin{center}
$41$

$224 \quad 1 \quad 34 \quad 84$

$41_E \quad 224 \quad 1 \quad 41$

$224 \quad 84$

\end{center}}.$
 \put(-85,-9){\line(3,-5){6}}   

\noindent
Therefore, by self-duality, there is another copy of $41$ in $L_5(P_{41_{A_{10}}})$, say $41_H$.

The remaining copy of $1$ must be in $L_3(P_{41_{A_{10}}})$ or $L_4(P_{41_{A_{10}}})$ by (*) and Section~\ref{ext}. Using the filtration (6) in ~\citep{sie} below and the fact that $P_{41_{A_{10}}}=P_{41_{A_{9}}}\uparrow_{A_9}^{A_{10}}.e_0$, we see that the remaining 1 must be in $L_4(P_{41_{A_{10}}})$ and the structure of  $P_{41_{A_{10}}}$ is as claimed in Theorem~\ref{T:Main1}.

\vspace{10pt}
$P_{41_{A_{9}}}= P_{13_{A_8}}\uparrow^{A_{9}}.f_0$

$\sim \ $
\parbox{0.37in}
{\begin{center}
$41$

$35 $

$41 $

\end{center}}
\quad
\put(3,45){\line(0,-100){110}}
\parbox{.3in}
{\rule{0in}{0.3in}\begin{center}
$1$

$7$

$1$
\end{center}}
\parbox{.15in}
{\rule{0in}{0.3in}\begin{center}
$\oplus$
\end{center}}
\parbox{.4in}
{\rule{0in}{0.3in}\begin{center}
$7$

$1 \quad21$

$7$
\end{center}}
\quad
\put(8,25){\line(0,-100){90}}
\parbox{0.6in}{\rule{0in}{0.63in}
\begin{center}
$41$

$35 $

$41 $
\end{center}} 
\parbox{0.15in}{\rule{0in}{0.63in}
\begin{center}
$\oplus$
\end{center}}
\parbox{0.6in}{\rule{0in}{0.63in}
\begin{center}
$35$

$21 \quad 35$

$35 $
\end{center}}
\put(0,15){\line(0,-90){80}}
\parbox{0.3in}{\rule{0in}{0.97in}
\begin{center}
$1$

$7$

$1 $
\end{center}} 
\parbox{0.2in}{\rule{0in}{0.97in}
\begin{center}
$\oplus$
\end{center}}
\parbox{0.5in}{\rule{0in}{0.97in}
\begin{center}
$7$

$1 \quad 21$

$7 $
\end{center}} 
\put(0,5){\line(0,-90){70}}
\parbox{0.35in}{\rule{0in}{1.35in}
\begin{center}
$41$

$35$

$41$
\end{center}}

\newpage
\subsubsection{The Loewy structure of $P_1$.} Landrock's lemma and the structures of 
$P_{34_{A_{10}}}, P_{41_{A_{10}}}, P_{84_{A_{10}}}$ and $P_{224_{A_{10}}}$ give the positions of the unlabelled simple modules  in the Loewy layers of $P_{1_{A_{10}}}$ as follows:

\vspace{15pt}
{\begin{center}
$1$

$34 \quad 41 \quad 84 \quad 224$

$34 \quad 224 \quad 224   \quad 84 \quad 41 \quad 41 \quad 1_B \quad 1_E \quad 1_F  \quad 1_G$

$ 34 \quad 34 \quad 34 \quad 224 \quad 224 \quad 84 \quad84 \quad 41 \quad 41 \quad 41 \quad 1_C \quad 1^* $

$34 \quad 224 \quad 224 \quad 84 \quad 41 \quad41 \quad 1_D \quad 1^* \quad 1^* \quad 1^* $

$34 \quad84 \quad 224  \quad 41$

$1_A$
\end{center}}

 \noindent
Since $(1,1)^1_{A_{10}}=0$,  $P_{1_{A_10}}$ has Loewy length $7$ and the unique bottom $1$ in  $P_{1_{A_10}}$, say $1_A$ is in the position as claimed above. 

Besides, as we have accounted for all the $84$s by Landrock's lemma and $84$ only comes from $21\uparrow_{A_{9}}^{A_{10}}=\mathfrak{D}(84;1;41;84)$, $1_B, 1_C$ and $1_D$ are as claimed. 

Now, induce the first three Loewy layers of $P_{1_{A_9}}$ to $A_{10}$ to get the following filtration:

 $V= \left(
\parbox{2.1 in}
{\begin{center}
$1$

$34 \quad 41 \quad 224$

$1 \quad 1 \quad 1 \quad 7 \quad 21 \quad 41\quad 35 \quad 35$
\end{center}}\right)\uparrow^{A_{10}}_{A_9} .e_0
\sim$
\parbox{3in}
{\begin{center}
$41$

$34 \quad 41 \quad 224$

\parbox{3in}{\rule[.1mm]{3in}{0.01in}
\begin{center}
$1 \oplus 1 \oplus 1 \oplus \parbox{1in}
{\begin{center}
$84$

$1 \quad 41$

$84$
\end{center}} 
\oplus 41 \oplus 224 \oplus 224$
\end{center}}
\end{center}}

$=$\parbox{3in}
{\begin{center}
$41$

$34 \quad 41 \quad 224 \quad 84$

$1 \quad 1 \quad 1 \quad 1\quad 41  \quad 41\quad 224 \quad 224$

$84$
\end{center}}

\noindent
so there are three other copies of $1$ in $L_3(P_{1_{A_{10}}})$, say $1_E, 1_F$ and $1_G$. 
We observe that  all Ext$^1_{A_{10}}(S,1)$ between $L_2(P_{1_{A_{10}}})$ and $L_3(P_{1_{A_{10}}})$ have been accounted for (here $S \in \{ 34, 41, 224, 84\}$) by Lemma~\ref{ext1}. Hence, no other copies of $1$ are in $L_3(P_{1_{A_{10}}})$.

Next, induce the first four Loewy layers of $P_{1_{A_9}}$ to $A_{10}$, call this induced module $W$. Then we have a filtration for $W$:
\begin{center}
  $W
\sim$
\parbox{4in}
{\begin{center}
$1$

$34 \quad 41 \quad 224 \quad84$

$1 \quad 1 \quad 1 \quad 1 \quad 41 \quad 41 \quad 224 \quad 224$

$84$
\parbox{4in}{\rule[.1mm]{4in}{0.01in}
\begin{center}
$1_H \oplus 1_I \oplus 34  \oplus 34 \oplus 34\oplus \parbox{1in}
{\begin{center}
$84$

$1 \quad 41$

$84$
\end{center}} 
\oplus 41 \oplus 41 \oplus 224 \oplus 224$
\end{center}}
\end{center}}

$=$\parbox{3in}
{\begin{center}
$1$

$34 \quad 41 \quad 224 \quad 84$

$1 \quad 1 \quad 1 \quad 1\quad 41  \quad 41\quad 224 \quad 224 \quad 84$

$1_H\quad 34 \quad 34 \quad 34 \quad 84 \quad 1 \quad 41 \quad 224 \quad 224 \quad 41 \quad 41$

$1_I\quad 84$
\end{center}}
\end{center}
Indeed, since $1_H$ and $1_I$ cannot extend any module in $L_4(W)$ other than $84_b$ and $(84,1)^1_{A_{10}}=1$, at most one of $1_H$ and $1_I$ is in $L_5(W)$. In other words, at least one of $1_H$ and $1_I$ is in $L_4(W)$ (since it cannot appear in $L_1(W), L_2(W)$ or $L_3(W)$). So without loss of generality, we assume $1_H \in L_4(W)$.

Finally, inducing the first five Loewy layers of $P_{1_{A_{9}}}$ to $A_{10}$, we obtain another three copies of $1$ that must be in  $L_4(P_{1_{A_{10}}})$ or  $L_5(P_{1_{A_{10}}})$. Now we have to account for the positions of the four $1^*$. By (*) in Section~\ref{p41}, at least one of the $1^*$ must appear in  $L_4(P_{1_{A_{10}}})$. Using the filtration (2) in \cite{sie} below, and that  $P_{1_{A_{10}}}= P_{1_{A_{9}}}\uparrow^{A_{10}}.e_0$, the remaining three$1^*$ must be in  $L_5(P_{1_{A_{10}}})$ and the structure of $P_{1_{A_{10}}}$ is as claimed in Theorem~\ref{T:Main1}.

\vspace{10pt}
$P_{1_{A_{9}}}= P_{1_{A_8}}\uparrow^{A_{9}}.f_0$

$\sim \ $
\parbox{0.37in}
{\begin{center}
$1$

$7$

$1  $

\end{center}}
\quad
\put(3,45){\line(0,-100){110}}
\parbox{.3in}
{\rule{0in}{0.3in}\begin{center}
$41$

$35$

$41$
\end{center}}
\parbox{.15in}
{\rule{0in}{0.3in}\begin{center}
$\oplus$
\end{center}}
\parbox{.4in}
{\rule{0in}{0.3in}\begin{center}
$35$

$21 \quad35$

$35$
\end{center}}
\quad
\put(8,25){\line(0,-100){90}}
\parbox{0.4in}{\rule{0in}{0.63in}
\begin{center}
$1$

$7$

$1 $
\end{center}} 
\parbox{0.15in}{\rule{0in}{0.63in}
\begin{center}
$\oplus$
\end{center}}
\parbox{0.13in}{\rule{0in}{0.63in}
\begin{center}
$1$

$7$

$1$
\end{center}}
\parbox{0.2in}{\rule{0in}{0.63in}
\begin{center}
$\oplus$\
\end{center}}
\parbox{0.3in}{\rule{0in}{0.63in}
\begin{center}
$7$

$1 \quad 21$

$7 $
\end{center}}
\parbox{0.2in}{\rule{0in}{0.63in}
\begin{center}
$\oplus$\
\end{center}}
\parbox{0.4in}{\rule{0in}{0.63in}
\begin{center}
$41$

$1 \quad 7$

$41 $
\end{center}}
\quad
\put(0,15){\line(0,-90){80}}
\parbox{0.3in}{\rule{0in}{0.97in}
\begin{center}
$41$

$35$

$41 $
\end{center}} 
\parbox{0.2in}{\rule{0in}{0.97in}
\begin{center}
$\oplus$
\end{center}}
\parbox{0.5in}{\rule{0in}{0.97in}
\begin{center}
$35$

$21 \quad 35$

$35 $
\end{center}} 
\put(0,5){\line(0,-90){70}}
\parbox{0.35in}{\rule{0in}{1.35in}
\begin{center}
$1$

$7$

$1 $
\end{center}}

\section{Acknowledgements}
\noindent
We are grateful to Richard Parker for explaining the use of the MeatAxe and thank Juergen Manfred Muller for pointing out the inaccuracies in a previous version of this paper.

\newpage
\appendix
\section*{Appendix}
Decompostion matrix of $A_{10}$  mod 3

$\begin{array}{r|rrrrrrrrrrr} \hline
 & 1
 & 34
 & 41
 & 84
 & 224
 & 9
 & 36
 & 90
 & 126
 & 279
 & 567
 \rule[-7pt]{0pt}{20pt} \\ \hline
1=\chi_{1} & 1 & . & . & . & . & . & . & . & . & . & . \rule[0pt]{0pt}{13pt} \\
35=\chi_{3} & 1 & 1 & . & . & . & . & . & . & . & . & . \\
42=\chi_{5} & 1 & . & 1 & . & . & . & . & . & . & . & . \\
75=\chi_{6} & . & 1 & 1 & . & . & . & . & . & . & . & . \\
84=\chi_{7} & . & . & . & 1 & . & . & . & . & . & . & . \\
160=\chi_{10} & 1 & 1 & 1 & 1 & . & . & . & . & . & . & . \\
210=\chi_{11} & 1 & . & 1 & 2 & . & . & . & . & . & . & . \\
224_{1}=\chi_{12} & . & . & . & . & 1 & . & . & . & . & . & . \\
224_{2}=\chi_{13} & . & . & . & . & 1 & . & . & . & . & . & . \\
300=\chi_{17} & 1 & 1 & 1 & . & 1 & . & . & . & . & . & . \\
350\chi_{19} & 1 & . & 1 & 1 & 1 & . & . & . & . & . & . \\
384_{1}\chi_{20} & 1 & 1 & 1 & 1 & 1 & . & . & . &  & . & . \\
384_{2}\chi_{21} & 1 & 1 & 1 & 1 & 1 & . & . & . & . & . & . \\
525=\chi_{23} & 2 & 1 & 1 & . & 2 & . & . & . & . & . & . \\
9=\chi_{2} & . & . & . & . & . & 1 & . & . & . & . & . \\
36=\chi_{4} & . & . & . & . & . & . & 1 & . & . & . & . \\
90=\chi_{8} & . & . & . & . & . & . & . & 1 & . & . & . \\
126=\chi_{9} & . & . & . & . & . & . & . & . & 1 & . & . \\
225=\chi_{14} & . & . & . & . & . & 1 & . & 1 & 1 & . & . \\
252=\chi_{15} & . & . & . & . & . & . & 1 & 1 & 1 & . & . \\
288=\chi_{16} & . & . & . & . & . & 1 & . & . & . & 1 & . \\
315=\chi_{18} & . & . & . & . & . & . & 1 & . & . & 1 & . \\
450=\chi_{22} & . & . & . & . & . & 1 & 1 & . & 1 & 1 & . \\
567=\chi_{24} & . & . & . & . & . & . & . & . & . & . & 1 \rule[-7pt]{0pt}{5pt} \\
\hline
\end{array}$

\vspace{15pt}
Cartan matrix

\vspace{15pt}
$
\begin{array}{r|rrrrrrrrrrrr} \hline
 & 1
 & 34
 & 41
 & 84
 & 224
 & 9
 & 36
 & 90
 & 126
 & 279
 & 567
 \rule[-7pt]{0pt}{20pt} \\ \hline
1 & 13 & 7 & 9 & 6 & 8 & . & . & . & . & . & . \rule[0pt]{0pt}{13pt} \\
34 & 7 & 7 & 6 & 3 & 5 & . & . & . & . & . & . \\
41 & 9 & 6 & 9 & 6 & 6 & . & . & . & . & . & . \\
84 & 6 & 3 & 6 & 9 & 3 & . & . & . & . & . & . \\
224 & 8 & 5 & 6 & 3 & 10 & . & . & . & . & . & . \\
9 & . & . & . & . & . & 4 & 1 & 1 & 2 & 2 & . \\
36 & . & . & . & . & . & 1 & 4 & 1 & 2 & 2 & . \\
90 & . & . & . & . & . & 1 & 1 & 3 & 2 & . & . \\
126 & . & . & . & . & . & 2 & 2 & 2 & 4 & 1 & . \\
279 & . & . & . & . & . & 2 & 2 & 1 & 3 & . & . \\
567 & . & . & . & . & . & . & . & . & . & . & 1  \rule[-7pt]{0pt}{5pt} \\
\hline
\end{array}
$

dim$_F$Ext$_{A_{10}}^{1}(S,T)$ for $S,T$ simple
\[
\begin{array}{r|rrrrrrrrrrrr} \hline
 & 1
 & 34
 & 41
 & 84
 & 224
 & 9
 & 36
 & 90
 & 126
 & 279
 & 567
 \rule[-7pt]{0pt}{20pt} \\ \hline
1 & . & 1 & 1 & 1 & 1 & . & . & . & . & . & . \rule[0pt]{0pt}{13pt} \\
34 & 1 & . & 1 & . & 1 & . & . & . & . & . & . \\
41 & 1 & 1 & . & 1 & 1 & . & . & . & . & . & . \\
84 & 1 & . & 1 & 1 & . & . & . & . & . & . & . \\
224 & 1 & 1 & 1 & . & 1 & . & . & . & . & . & . \\
9 & . & . & . & . & . & . & . & . & 1 & 1 & . \\
36 & . & . & . & . & . & . & . & . & 1 & 1 & . \\
90 & . & . & . & . & . & . & . & . & 1 & . & . \\
126 & . & . & . & . & . & 1 & 1 & 1 & . & . & . \\
279 & . & . & . & . & . & 1 & 1 & . & . & . & . \\
567 & . & . & . & . & . & . & . & . & . & . & .  \rule[-7pt]{0pt}{5pt} \\
\hline
\end{array}
\]

\bibliography{refs}
\bibliographystyle{apalike}
\end{document}